\renewcommand{\baselinestretch}{1.5}
    \newcommand{\otoprule}{\midrule[\heavyrulewidth]}
    \newcolumntype{Q}{>{$}l<{$}}
\newcommand*{\abs}[1]{\left\lvert#1\right\rvert}
\newcommand*{\set}[1]{\left\{#1\right\}}
\newcommand{\prob}{\mathbf{P}}
\newcommand{\E}{\mathbf{E}}
\newcommand{\ind}{\mathds 1}
\newcommand{\sd}{\bm{\sigma}}
\newtheorem{thm}{Theorem}
\newtheorem{lem}[thm]{Lemma}
\newtheorem{prop}[thm]{Proposition}
\newtheorem{cor}[thm]{Corollary}
\theoremstyle{definition}
\theoremstyle{remark}
\newtheorem{remark}{Remark}
\begin{document}

\noindent TWO METHODS OF ESTIMATION OF THE DRIFT PARAMETERS\\ OF THE COX--INGERSOLL--ROSS PROCESS: CONTINUOUS OBSERVATIONS 
\vskip 3mm

\vskip 5mm
\noindent Olena Dehtiar, Yuliya Mishura and Kostiantyn Ralchenko

\noindent Department of Probability, Statistics and Actuarial Mathematics

\noindent Mechanics and Mathematics Faculty

\noindent Taras Shevchenko National University of Kyiv

\noindent 64 Volodymyrska, 01601 Kiev, Ukraine

\noindent \ 

\vskip 3mm
\noindent Key Words: Cox--Ingersoll--Ross process, continuous observations, parameter estimation, strong
consistency.
\vskip 3mm

\noindent ABSTRACT

    We consider a stochastic differential equation of the form
	$dr_t = (a - b r_t) dt + \sigma\sqrt{r_t}dW_t$,
	where $a$, $b$ and $\sigma$ are positive constants.
	The solution corresponds to the Cox--Ingersoll--Ross process.
	We study the estimation of an unknown drift parameter $(a,b)$ by continuous observations of a sample path $\set{r_t,t\in[0,T]}$.
	First, we prove the strong consistency of the maximum likelihood estimator. Since this estimator is well-defined only in the case $2a>\sigma^2$, we propose another estimator that is defined and strongly consistent for all positive $a$, $b$, $\sigma$.
	The quality of the estimators is illustrated by simulation results.
\vskip 4mm

\noindent 1.   INTRODUCTION

	 The Cox--Ingersoll--Ross (CIR) process  is a very famous object and  is a unique solution of the following stochastic differential equation
	\begin{equation}\label{equ:1}
	dr_t = (a - b r_t) dt + \sigma\sqrt{r_t}dW_t,\quad r_t\big|_{t=0}=r_0>0
	\end{equation}
	where $W=\{W_t, t \ge 0\}$ is a Wiener process and $a,b,\sigma$ are positive constants.
	
	There are many papers devoted to the construction and the asymptotic properties of the parameter estimators of the CIR process, based on conditional least squares and the maximum likelihood estimator  see,  e.\,g., \cite{Overbeck1997,Li2015,Rosi2010,Barczy2017,BenAlaya2012,BenAlaya2013}. More precisely,  \cite{Overbeck1997}  used conditional least squares as a basic method  and derived two estimators, which differ by the method of estimating of parameter   $\sigma$: pseudo likelihood method and unweighted least squares method based on squared residuals. Moreover, strong consistency of these estimators was proved for both observations at equidistant time point and continuous observations.  Nevertheless, the simulation study demonstrated  that the maximum likelihood estimator outperforms the conditional least-squares estimators and the pseudo likelihood approach in most cases.

    \cite{BenAlaya2012,BenAlaya2013} presented a new approach to the investigation of the asymptotic behavior of the maximum likelihood estimator, depending on the values of the parameters, both for continuous and discrete observations. Using an exact simulation algorithm, the authors illustrated practical behavior of these estimators' errors covering ergodic and nonergodic situations.
    Asymptotic properties of maximum likelihood estimator for the stable CIR process were studied in \cite{Li2015}. In \cite{Barczy2017} the authors investigated the asymptotic behavior of the maximum likelihood estimator for so-called jump-type CIR process, driven by a standard Wiener process and a subordinator. They distinguish three cases: subcritical, critical and supercritical.

    The paper  \cite{Rosi2010} contains the  method that is based on the sequential Monte Carlo techniques and shows how to construct a simulated maximum  likelihood procedure. This paper also describes two methods of computing the likelihood: sampling and re-sampling algorithm which solve the problem of degeneracy for realistic sample sizes. In order to maximize the likelihood, the author applies the genetic algorithm that relies on the survival of the fittest in determining the optimal parameter vector. Testing on simulated data confirmed that this approach allows not to undermine the accuracy of the estimation procedure by the effect of simulation errors and copes with larger parameter dimensions at a modest computational cost.
    Avoiding  the  computational  burden  the  MATLAB implementation  of  the  estimation  routine  is  provided  and  tested  in  \cite{Kladivko2007}. Moreover, the simulation algorithm for the approximation of the CIR process trajectories was described in \cite{Milstein2013}.

    Concerning the discretization, due to the square-root diffusion coefficient the classical Euler--Maruyama scheme does not preserve the non-negativity of the process. \cite{Cozma2016}  provided a brief discussion of the discretization schemes often encountered in the finance literature. They also investigated the exponential integrability properties which play an essential role in deriving strong convergence of Euler discretization schemes. The high priority to the convergence results and its applications were given in the paper \cite{Deelstra1995}.
    The Euler approach to constructing discretization schemes was also introduced in \cite{Dereich2012}.  Here the using of  the  drift-implicit  square-root  Euler  method  gives  a  strictly  positive  approximation  of  the original  CIR  process  and  some  global  convergence  results. The  paper  \cite{Mishura2016}  is  the  example of  constructing  additive  and  multiplicative  discrete  approximation  schemes  taking  the  Euler approximations of the CIR process itself but replacing the increments of the Wiener process with i.\,i.\,d.\ bounded vanishing symmetric random variables.

 In this paper we investigate two estimators of the parameter $(a,b)$ by continuous observations of a sample path of CIR process $r=\set{r_t,t\in[0,T]}$ and prove their strong consistency as $T\to\infty$.
    The first one is the standard maximum likelihood estimator, which was constructed and studied in \cite{BenAlaya2012,BenAlaya2013}.
    Compared to the known results, we establish the strong consistency instead of weak one.
    Note that the maximum likelihood estimator is well-defined only if $2a\ge\sigma^2$, because, in particular,  it contains the integral $\int_0^T\frac1{r_t}dt$, which exists with probability one if and only if $2a\ge\sigma^2$, see \cite[Prop.~4]{BenAlaya2012}. For this reason, we decided to create some statistics that converges regardless of whether $2a\ge\sigma^2$ or not. On this way  we created   a different estimator of the vector parameter $(a,b)$, which is strongly consistent for all positive $a$, $\sigma$ and $b$.
    Another advantage of the new alternative  estimator is that it has simpler form, therefore, it is computationally faster. It includes only two statistics of the process $r$, namely the Lebesgue integrals $\int_0^Tr_t\,dt$ and $\int_0^Tr_t^2\,dt$, see Theorem~\ref{th:consist-new} below.
At the same time the maximum likelihood estimator depends on two Lebesgue integrals, $\int_0^Tr_t\,dt$ and $\int_{0}^{T}\frac{dt}{r_t}$, on the stochastic integral $\int_{0}^{T}\frac{dr_t}{r_t}$ and on the process itself.
    Particular attention in the paper is paid to the a.\,s. asymptotic behavior of the integral
    $\int_0^T r_t^2 dt$, which is crucial for the construction of the alternative estimator.
    We would like to emphasize that in the present paper we restrict ourselves to the case $b>0$. The boundary case $b=0$ was investigated in \cite{BenAlaya2013}, where the consistency and asymptotic distribution on the maximum likelihood estimator was derived, assuming that $2a\ge\sigma^2$.
    To the best of our knowledge, parameter estimation for the case $b<0$ remains an open problem.

The paper is organized as follows.
    In Section~2 we recall some basic properties of CIR process.
    Section~3 is devoted to the strong consistency of the maximum likelihood estimator.
    In Section~4 we introduce an alternative estimator and prove its strong consistency.
\vskip 3mm
    	
\noindent 2.    PRELIMINARIES
  
    In this section we consider the properties of the CIR process. Most of them will  be useful for statistical parameter estimation, but some facts are of independent interest. These facts are well-known, but we combined them into one statement for the reader's convenience.
	\begin{prop}\label{prop:CIR}
	Assume that $2a> \sigma^2$.	
Then
	\begin{enumerate}[1)]
			\item The unique solution $r=\{r_t, t\ge 0\}$ of equation \eqref{equ:1} is positive with probability 1:
\begin{equation}\label{positivity}
\inf\set{t\ge0:r_t=0}=+\infty \quad \text{a.\,s.}
\end{equation}		
(with convention $\inf\emptyset=+\infty$).	
Moreover,
\begin{equation}\label{limits}
\prob\set{\limsup_{t\rightarrow\infty} r_t=+\infty}= \prob\set{\liminf_{t\rightarrow\infty} r_t=0}=1.
\end{equation}
			\item The process $r$ is ergodic and it has continuous stationary density  that corresponds to gamma distribution and has the following form:
\begin{equation}\label{gamma}
p_{\infty}(x)=\frac{\beta^\alpha}{\Gamma(\alpha)}x^{\alpha-1}e^{-\beta x}\ind_{x>0}, \quad \alpha=\frac{2a}{\sigma^2},\;\beta=\frac{2b}{\sigma^2}.
\end{equation}
			\item  For any function $f\colon\mathbb{R}^+ \rightarrow \mathbb{R}$ such that $\int_{\mathbb{R}^+}|f(x)|p_{\infty}(x)dx<\infty$ we have that
			\begin{equation}\label{ergo}
			\frac{1}{T}\int_0^Tf(r_t)dt\rightarrow \int_{\mathbb{R}} f(x)p_\infty(x)dx,\quad\text{a.\,s., as } T\rightarrow\infty.
			\end{equation}
		\end{enumerate}
	\end{prop}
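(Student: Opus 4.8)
The plan is to regard \eqref{equ:1} as a one-dimensional regular diffusion on the interval $(0,\infty)$ and to read off all three assertions from its scale function and speed measure, the three parts being, respectively, a boundary-classification statement, an invariant-measure computation, and the ergodic theorem for positive recurrent diffusions.

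First I would record existence and uniqueness: the diffusion coefficient $\sigma\sqrt{x}$ is $\tfrac12$-H\"older continuous, so the Yamada--Watanabe criterion yields pathwise uniqueness, and together with weak existence (obtained by comparison, which also shows the solution is nonnegative up to the first time it hits $0$) this gives a unique strong solution. Next I would compute, up to an irrelevant multiplicative constant, the scale density
\begin{equation*}
s'(x)=\exp\set{-\int_{1}^{x}\frac{2(a-by)}{\sigma^2 y}\,dy}=x^{-\alpha}e^{\beta x},\qquad \alpha=\frac{2a}{\sigma^2},\quad \beta=\frac{2b}{\sigma^2},
\end{equation*}
and the speed density $m'(x)=\bigl(\tfrac{\sigma^2}{2}\,x\,s'(x)\bigr)^{-1}$, which is proportional to $x^{\alpha-1}e^{-\beta x}$. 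Since $2a>\sigma^2$ means $\alpha>1$, the integral $\int_{0+}s'(x)\,dx$ diverges, hence $s(0+)=-\infty$ and the boundary $0$ is not attainable from the interior; likewise $s'(x)\to\infty$ exponentially as $x\to\infty$ forces $\int^{\infty}s'(x)\,dx=\infty$, so $+\infty$ is not attainable either. This proves \eqref{positivity}, and in fact shows that the diffusion is recurrent on $(0,\infty)$; a recurrent regular diffusion enters every neighbourhood of each endpoint of its state space infinitely often, which is exactly \eqref{limits}.

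For part 2) I would note that the total mass of the speed measure is
\begin{equation*}
\int_0^\infty m'(x)\,dx\ \propto\ \int_0^\infty x^{\alpha-1}e^{-\beta x}\,dx=\frac{\Gamma(\alpha)}{\beta^{\alpha}}<\infty,
\end{equation*}
so the diffusion is positive recurrent and has a unique invariant probability measure, obtained by normalising $m'$; this is precisely the gamma density \eqref{gamma}, which proves ergodicity together with the stated form of the stationary density. Finally, part 3) is the strong law of large numbers (ergodic theorem) for positive recurrent one-dimensional diffusions: for the stationary version of $r$ the time shift is ergodic because the invariant measure is unique and the transition semigroup is irreducible, so Birkhoff's theorem gives \eqref{ergo} a.s.\ under the stationary law for every $f\in L^1(p_\infty)$; the passage to the fixed initial value $r_0>0$ uses the recurrence established above. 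I expect the main technical point to be precisely this last step together with the fact that \eqref{ergo} is claimed for all $p_\infty$-integrable $f$ rather than just bounded ones; the clean way to handle both at once is the ratio ergodic theorem applied to the regenerative (excursion) structure of $r$ around a fixed level, which makes the time average of $f(r_t)$ converge to $\int f\,dp_\infty$ regardless of the starting point and for any $f\in L^1(p_\infty)$. All of these ingredients are classical; see the references cited in the introduction.
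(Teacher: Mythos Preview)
Your proposal is correct and follows essentially the same route as the paper. The paper does not give a self-contained proof of this proposition; it simply invokes Feller's test for explosions and the boundary classification in \cite{KaratzasShreve91} for part~1), and the general ergodic theory for one-dimensional diffusions in \cite{Kutoyants,Skorokhod-asymp} for parts~2)--3), referring to \cite{meriem,Alfonsi} for the verification of the hypotheses in the CIR case. Your sketch via the scale function $s'(x)\propto x^{-\alpha}e^{\beta x}$ and speed density $m'(x)\propto x^{\alpha-1}e^{-\beta x}$ is exactly the computation underlying those cited results, so the two approaches coincide in substance; you have simply made explicit what the paper leaves to the references.
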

	
    The results of Proposition~\ref{prop:CIR} follow from the general theory of homogeneous diffusions.  In particular, \eqref{positivity} follows from the Feller's test for explosions \citep[see, e.\,g.,][Thm.~5.29, p.~348]{KaratzasShreve91}, and \eqref{limits} can be deduced from \citet[Prop.~5.22, p.~345]{KaratzasShreve91} \citep[see also][Sec.~2.3]{MijatovicUrusov2012}. Statements 2)--3) are based on the ergodic theory for homogeneous diffusions, see, e.\,g., \citet[Thm.~1.16]{Kutoyants} or \citet[Ch. 1, \S~3]{Skorokhod-asymp}. The conditions of these general results for the case of the CIR process can be easily  verified, see, e.\,g., \citet[Example 4, p.~280]{meriem}.  A direct proof of \eqref{positivity} for the CIR process is given in	 \citet[Sec.~1.2.4]{Alfonsi}.
     The derivation of the stationary distribution \eqref{gamma} can be found, e.\,g., in \citet[Eq.~(1.24)]{Alfonsi}.

	\begin{cor}\label{cor1.2} It follows immediately from \eqref{ergo} (see also Remark after Proposition 4 in \cite{BenAlaya2012}) that  in the case  $2a> \sigma^2$ we have the following asymptotic relations:
		\begin{gather}
		\frac{1}{T}\int_0^T r_t\,dt\rightarrow \int_{\mathbb{R}}  xp_\infty(x)\,dx=\frac{\alpha}{\beta}=\frac{a}{b},\quad\text{a.\,s., as } T\rightarrow\infty,
		\label{ergo-1}
		\\
		\frac{1}{T}\int_0^T \frac{1}{r_t}\,dt\rightarrow \int_{\mathbb{R}} \frac{1}{x}p_\infty(x)\,dx=\frac{\beta}{\alpha-1}=\frac{b}{a-\sigma^2/2},\quad\text{a.\,s., as } T\rightarrow\infty,
		\label{ergo-2}
		\\
		\frac{1}{T}\int_0^T r_t^2\,dt \to \int_{\mathbb{R}} x^2 p_\infty(x)dx = \left (\frac{\alpha}{\beta}\right )^2 + \frac{\alpha}{\beta^2}
				=\frac{a^2}{b^2} + \frac{a\sigma^2}{2b^2},
				\quad\text{a.\,s., as } T\rightarrow\infty.
		\label{ergo-3}		
		\end{gather}
	\end{cor}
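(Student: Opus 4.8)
The plan is to derive all three relations as direct consequences of the ergodic theorem \eqref{ergo} by choosing appropriate test functions $f$, so the only real work is checking the integrability hypothesis $\int_{\mathbb{R}^+}|f(x)|p_\infty(x)\,dx<\infty$ in each case and then evaluating the resulting moment of the gamma law \eqref{gamma}.

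For \eqref{ergo-1}, I would take $f(x)=x$. Since the stationary density is $\mathrm{Gamma}(\alpha,\beta)$ with $\alpha=2a/\sigma^2>1$ and $\beta=2b/\sigma^2$, every positive moment is finite, so \eqref{ergo} applies and gives the limit $\int_0^\infty x\,p_\infty(x)\,dx=\alpha/\beta$; substituting the values of $\alpha$ and $\beta$ yields $a/b$. For \eqref{ergo-3}, I would similarly take $f(x)=x^2$, note again that the second moment of a gamma distribution is finite, and use $\int_0^\infty x^2 p_\infty(x)\,dx=\operatorname{Var}+(\mathbf{E})^2=\alpha/\beta^2+(\alpha/\beta)^2$, which after substitution is $a^2/b^2+a\sigma^2/(2b^2)$.

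The only slightly delicate case is \eqref{ergo-2}, where $f(x)=1/x$ and one must verify $\int_0^\infty x^{-1}p_\infty(x)\,dx<\infty$. This integral equals $\frac{\beta^\alpha}{\Gamma(\alpha)}\int_0^\infty x^{\alpha-2}e^{-\beta x}\,dx$, which converges at the origin precisely because $\alpha-2>-1$, i.e.\ $\alpha>1$, which is exactly the standing assumption $2a>\sigma^2$; here is where that hypothesis is genuinely used. Evaluating, the integral is $\frac{\beta^\alpha}{\Gamma(\alpha)}\cdot\frac{\Gamma(\alpha-1)}{\beta^{\alpha-1}}=\frac{\beta}{\alpha-1}$, and substituting $\alpha=2a/\sigma^2$, $\beta=2b/\sigma^2$ gives $\frac{b}{a-\sigma^2/2}$. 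I expect no real obstacle beyond this integrability check; the rest is bookkeeping with the gamma-function identity $\Gamma(\alpha)=(\alpha-1)\Gamma(\alpha-1)$ and the moment formulas for the gamma distribution.
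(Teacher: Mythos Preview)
Your proposal is correct and follows exactly the approach indicated in the paper: the corollary is stated as an immediate consequence of \eqref{ergo}, and your argument simply makes explicit the choice of $f(x)=x$, $f(x)=1/x$, $f(x)=x^2$, the verification of the integrability hypothesis (in particular that $\alpha>1$ is needed for \eqref{ergo-2}), and the evaluation of the resulting gamma moments. There is nothing to add.
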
	
	
\begin{remark}\label{rem:conv}
It was proved in \cite[Thm.~1]{Deelstra1995} that the convergence \eqref{ergo-1} holds also in the case $0<2a\le\sigma^2$.
The convergence \eqref{ergo-3} is valid for all positive $a$ and $\sigma$, this will be justified in Theorem~\ref{th:int-r2} below.
\end{remark}	
\vskip 3mm
    	
\noindent 3.  MAXIMUM LIKELIHOOD ESTIMATION

	Let us recall the construction of the maximum likelihood estimator of the couple of unknown parameters $(a,b)$ by the continuous observations of $r$ over the interval $[0,T]$.
	We assume that $2a> \sigma^2$ throughout this section.

Dividing \eqref{equ:1} by $\sqrt{r_t}$ and integrating \eqref{equ:1} over the time interval $[0,s]$ we get  the equality:
		\begin{equation*}
		\int_{0}^{s}\frac{dr_t}{\sqrt{r_t}} = \int_{0}^{s}\left(\frac{a}{\sqrt{r_t}}-b \sqrt{r_t}\right)dt +  \sigma W_s.
		\end{equation*}
		
		In order to construct likelihood function for the estimation of couple $(a,b)$ of parameters, we
		use the  Girsanov theorem for the Wiener process with the drift that equals $$\frac{1}{\sigma}\int_{0}^{s} \left(\frac{a}{\sqrt{r_t}}-b \sqrt{r_t}\right)dt.$$
		Then the likelihood function that corresponds  to the likelihood  $dQ_{(0,0)}/dQ_{(a,b)}$, where $Q_{(0,0)}$ is a probability measure responsible for zero values of parameters, and $Q_{(a,b)}$ is responsible for the couple  $(a,b)$, gets the following form:
		\begin{align*}
		\mathcal{L} & = \exp \left\{ - \int_{0}^{T} \frac{a - b r_t}{\sigma \sqrt{r_t}}dW_t - \frac{1}{2} \int_{0}^{T} \frac{(a - b r_t)^2}{\sigma^2 r_t}dt \right\}\\
		&= \exp \left\{- \int_{0}^{T} \frac{a - b r_t}{\sigma^2 r_t}dr_t + \frac{1}{2} \int_{0}^{T} \frac{(a - b r_t)^2}{\sigma^2 r_t}dt\right\}.
		\end{align*}
		Since we are interested in maximization of the function that corresponds to $dQ_{(a,b)}/dQ_{(0,0)}$, our likelihood function has the form
		\begin{align*}
		\tilde{\mathcal{L}} & =
		\exp \left\{  \int_{0}^{T} \frac{a - b r_t}{\sigma^2 r_t}dr_t - \frac{1}{2} \int_{0}^{T} \frac{(a - b r_t)^2}{\sigma^2 r_t}dt\right\}.
		\end{align*}		
The maximum likelihood estimator for the couple $(a,b)$ is constructed by maximizing of $\tilde{\mathcal{L}}$ with respect to $(a,b)$.
It has the following form:
		\begin{equation}\label{equ:5}
		\hat a_T  =\frac{ \int_{0}^{T}r_tdt \int_{0}^{T}\frac{dr_t}{r_t} - T\cdot (r_T-r_0)}{ \int_{0}^{T}r_t dt \cdot\int_{0}^{T}\frac{dt}{r_t} - T^2};
		\end{equation}
		\begin{equation}\label{equ:6}
		\hat b_T =\frac{(r_0-r_T) \int_{0}^{T} \frac{dt}{r_t} + T \int_{0}^{T}\frac{dr_t}{r_t}}{ \int_{0}^{T}r_t dt \cdot\int_{0}^{T}\frac{dt}{r_t} - T^2}.
		\end{equation}

	\begin{thm}\label{th:consist-mle}
	Assume that $2a>\sigma^2$. Then the estimator $(\hat a_T, \hat b_T)$ is strongly consistent.
	\end{thm}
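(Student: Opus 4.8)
The plan is to turn each estimator into the true parameter plus an explicit error term which is a ratio of path functionals, and then to show that the normalised numerator of the error vanishes a.s.\ while the denominator converges a.s.\ to a strictly positive constant. Throughout put $I_1(T)=\int_0^T\frac{dt}{r_t}$, $I_2(T)=\int_0^T r_t\,dt$, $M_T=\int_0^T\frac{dW_t}{\sqrt{r_t}}$, $N_T=\int_0^T\sqrt{r_t}\,dW_t$; here $I_1(T)<\infty$ a.s.\ for every $T$ because $2a>\sigma^2$, so $M$ is a well-defined continuous local martingale. Dividing \eqref{equ:1} by $r_t$, resp.\ noting $r_T-r_0=\int_0^T dr_t$, and integrating, one gets
\[
\int_0^T\frac{dr_t}{r_t}=a\,I_1(T)-bT+\sigma M_T,\qquad r_T-r_0=aT-b\,I_2(T)+\sigma N_T .
\]
Substituting these two identities into \eqref{equ:5}--\eqref{equ:6}, the deterministic parts collapse to $a$ and $b$ and a short computation gives
\[
\hat a_T-a=\sigma\,\frac{I_2(T)\,M_T-T\,N_T}{I_1(T)\,I_2(T)-T^2},\qquad
\hat b_T-b=\sigma\,\frac{T\,M_T-I_1(T)\,N_T}{I_1(T)\,I_2(T)-T^2}.
\]
By the Cauchy--Schwarz inequality $I_1(T)I_2(T)\ge T^2$, with strict inequality a.s.\ (equality would force $r_t$ to be constant on $[0,T]$), so both estimators are well defined.

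Next I would normalise by $T^2$ and let $T\to\infty$. By Corollary~\ref{cor1.2}, $I_2(T)/T\to a/b$ and $I_1(T)/T\to b/(a-\sigma^2/2)$ a.s.\ (here $2a>\sigma^2$ is exactly what makes the second limit finite), whence
\[
\frac{I_1(T)I_2(T)-T^2}{T^2}\;\longrightarrow\;\frac{a}{a-\sigma^2/2}-1=\frac{\sigma^2}{2a-\sigma^2}>0\quad\text{a.s.}
\]
For the numerators it then suffices to show $M_T/T\to0$ and $N_T/T\to0$ a.s. The processes $M$ and $N$ are continuous local martingales with quadratic variations $\langle M\rangle_T=I_1(T)$ and $\langle N\rangle_T=I_2(T)$, and by the ergodic relations just quoted $\langle M\rangle_T\sim\frac{b}{a-\sigma^2/2}\,T\to\infty$ and $\langle N\rangle_T\sim\frac ab\,T\to\infty$ a.s. The strong law of large numbers for continuous local martingales then yields $M_T/\langle M\rangle_T\to0$ and $N_T/\langle N\rangle_T\to0$ a.s.; multiplying by $\langle M\rangle_T/T$ and $\langle N\rangle_T/T$ respectively gives $M_T/T\to0$ and $N_T/T\to0$ a.s. Feeding these together with the limits of $I_1(T)/T$ and $I_2(T)/T$ into the two displayed expressions, both numerators divided by $T^2$ tend to $0$, so $\hat a_T\to a$ and $\hat b_T\to b$ a.s.

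The argument has no deep obstacle: it rests on (i) the ergodic averaging of Corollary~\ref{cor1.2}, which is already available, and (ii) the strong law for local martingales, which applies because the relevant quadratic variations grow linearly in $T$. The one point needing care is that all objects are meaningful, i.e.\ that $\int_0^T dt/r_t<\infty$ a.s.\ and that $\int_{\mathbb R^+}x^{-1}p_\infty(x)\,dx<\infty$; both hold under the standing hypothesis $2a>\sigma^2$ (cf.\ \cite[Prop.~4]{BenAlaya2012}), which is precisely why the estimator — and hence this theorem — is confined to that regime. If one wished to avoid invoking the local-martingale strong law, the bound $M_T/T\to0$ could instead be obtained from $\E\langle M\rangle_T=O(T)$ via Doob's maximal inequality along integer times together with a routine interpolation over consecutive integers.
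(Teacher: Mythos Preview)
Your proof is correct and follows essentially the same route as the paper: substitute the SDE identities into \eqref{equ:5}--\eqref{equ:6} to obtain $\hat a_T=a+R_T^a$, $\hat b_T=b+R_T^b$, normalise numerator and denominator by $T^2$, invoke Corollary~\ref{cor1.2} for the denominator, and apply the strong law of large numbers for local martingales to get $M_T/T\to0$ and $N_T/T\to0$. The only additions are your Cauchy--Schwarz remark on well-definedness and the aside about a Doob-type alternative, both of which are harmless extras rather than a different strategy.
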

	\begin{proof}
		Taking into consideration that ${ \int_{0}^{T}\frac{dr_t}{r_t} = a \int_{0}^{T}\frac{dt}{r_t} - b T + \sigma \int_{0}^{T}\frac{dW_t}{\sqrt{r_t}}}$ we can represent \eqref{equ:5} in the following form:
		\begin{align}\label{equ:7}
		\hat a_T &= \frac{ a\int_{0}^{T}\frac{dt}{r_t}\int_{0}^{T}r_t dt - bT\int_{0}^{T}r_t dt + \sigma  \int_{0}^{T}\frac{dW_t}{\sqrt{r_t}}\int_{0}^{T}r_t dt - T(r_T-r_0)}{ \int_{0}^{T}r_t dt \cdot \int_{0}^{T}\frac{dt}{r_t} - T^2} {}\nonumber\\
		& =\frac{a\int_{0}^{T}\frac{dt}{r_t}\int_{0}^{T}r_t dt - bT\int_{0}^{T}r_t dt + \sigma  \int_{0}^{T}\frac{dW_t}{\sqrt{r_t}}\int_{0}^{T}r_t dt -T\int_{0}^{T} (a - b r_t)dt - \sigma T  \int_{0}^{T}\sqrt{r_t}dW_t}{ \int_{0}^{T}r_t dt \cdot \int_{0}^{T}\frac{dt}{r_t} - T^2} {}\nonumber\\
		&= a + \frac{ \sigma  \int_{0}^{T}\frac{dW_t}{\sqrt{r_t}} \cdot \int_{0}^{T}r_t{dt}- \sigma T \int_{0}^{T}\sqrt{r_t}dW_t}{ \int_{0}^{T}r_t dt \cdot \int_{0}^{T}\frac{dt}{r_t} - T^2},
		\end{align}
and similarly, estimator from \eqref{equ:6} can be presented as
		\begin{align}\label{equ:8}
		\hat b_T &= \frac{ (r_0-r_T) \int_{0}^{T} \frac{dt}{r_t} + a T \int_{0}^{T}\frac{dt}{r_t} - b T^2 + \sigma T \int_{0}^{T}\frac{dW_t}{\sqrt{r_t}}}{ \int_{0}^{T}r_t dt \cdot \int_{0}^{T}\frac{dt}{r_t} - T^2} {}\nonumber\\
		& =\frac{ -\int_{0}^{T} (a - b r_t)dt \cdot \int_{0}^{T}\frac{dt}{r_t} - \sigma  \int_{0}^{T}\sqrt{r_t}dW_t\cdot \int_{0}^{T}\frac{dt}{r_t} + a T \int_{0}^{T}\frac{dt}{r_t} - b T^2 +  \sigma T\int_{0}^{T}\frac{dW_t}{\sqrt{r_t}}}{ \int_{0}^{T}r_t dt \cdot \int_{0}^{T}\frac{dt}{r_t} - T^2} \nonumber\\
		&= b + \frac{ -\sigma  \int_{0}^{T}\sqrt{r_t}dW_t \cdot \int_{0}^{T}\frac{dt}{r_t}+ \sigma T \int_{0}^{T}\frac{dW_t}{\sqrt{r_t}}}{ \int_{0}^{T}r_t dt \cdot \int_{0}^{T}\frac{dt}{r_t} - T^2}.
		\end{align}
Consider the remainder $$R_T^a=\frac{ \sigma  \int_{0}^{T}\frac{dW_t}{\sqrt{r_t}} \cdot \int_{0}^{T}r_t{dt}- \sigma T \int_{0}^{T}\sqrt{r_t}dW_t}{ \int_{0}^{T}r_t dt \cdot \int_{0}^{T}\frac{dt}{r_t} - T^2}$$ from \eqref{equ:7}, and the remainder
	$$R_T^b=\frac{ -\sigma  \int_{0}^{T}\sqrt{r_t}dW_t \cdot \int_{0}^{T}\frac{dt}{r_t}+ \sigma T \int_{0}^{T}\frac{dW_t}{\sqrt{r_t}}}{ \int_{0}^{T}r_t dt \cdot \int_{0}^{T}\frac{dt}{r_t} - T^2}$$ from \eqref{equ:8}	can be considered in the same lines.
So,
    \begin{align}\label{equ:9}
		R_T^a=\frac{ \sigma  \left(\frac{1}{T}\int_{0}^{T}\frac{dW_t}{\sqrt{r_t}}\right) \cdot \left(\frac{1}{T}\int_{0}^{T}r_t{dt}\right)- \sigma \left(\frac{1}{T} \int_{0}^{T}\sqrt{r_t}dW_t\right)}{ \frac{1}{T^2}\int_{0}^{T}r_t dt \cdot \int_{0}^{T}\frac{dt}{r_t} - 1}
	\end{align}
According to Corollary~\ref{cor1.2}, relations \eqref{ergo} and \eqref{ergo-1}, the denominator in \eqref{equ:9} tends to $\frac{a}{a-\frac{\sigma^2}{2}}-1 > 0.$
Moreover, both values in the numerator, $\frac{1}{T}\int_{0}^{T}\frac{dW_t}{\sqrt{r_t}}$ and $\frac{1}{T} \int_{0}^{T}\sqrt{r_t}dW_t$ tends to zero a.s. as $T \rightarrow \infty$.
Indeed,
$$\frac{1}{T}\int_{0}^{T}\frac{dW_t}{\sqrt{r_t}} = \frac{\int_{0}^{T}\frac{dt}{r_t}}{T} \cdot \frac{\int_{0}^{T}\frac{dW_t}{\sqrt{r_t}}}{\int_{0}^{T}\frac{dt}{r_t}},$$
and $$\frac{\int_{0}^{T}\frac{dt}{r_t}}{T} \rightarrow \frac{b}{a-\frac{\sigma^2}{2}}.$$
This means that $\int_{0}^{T}\frac{dt}{r_t} \rightarrow \infty, T \rightarrow \infty,$ but $\int_{0}^{T}\frac{dt}{r_t}$ is a square characteristics of the locally square integrable martingale $\int_{0}^{T}\frac{dW_t}{\sqrt{r_t}}.$
According to the strong law of large numbers for locally square integrable martingales (\cite{lipshir}), $\left(\int_{0}^{T}\frac{dt}{r_t}\right)^{-1} \cdot  \int_{0}^{T}\frac{dW_t}{\sqrt{r_t}}\rightarrow 0$ a.s. as $T \rightarrow \infty.$
Therefore, $\frac{1}{T}\int_{0}^{T}\frac{dW_t}{\sqrt{r_t}} \rightarrow 0$ a.s. as $T \rightarrow \infty$, and similar relation holds for $\frac{1}{T}\int_{0}^{T}\sqrt{r_t}dW_t$.
Together with \eqref{equ:9}, this means that $\hat a_T$ is strongly consistent.
	\end{proof}

\begin{remark}
The weak consistency and asymptotic normality of the maximum likelihood estimator $(\hat a,\hat b)$ was obtained in \cite{BenAlaya2012,BenAlaya2013}.
Mention that the weak consistency holds also in the boundary cases, when $2a=\sigma^2$ and/or $b=0$ (however, joint asymptotic distribution is not normal in these cases). If $2a<\sigma^2$, then the maximum likelihood estimator is not well-defined. If $2a\ge\sigma^2$ and $b<0$, then it is not consistent.
\end{remark}
	
	\begin{remark}
An alternative parametrization of the CIR process is
		\begin{equation}\label{equ:10}
		dr_t = \alpha(\mu -  r_t) dt + \sigma\sqrt{r_t}dW_t, \quad r_t\bigr|_{t=0}=r_0>0,
		\end{equation}
where $W=\{W_t, t \ge 0\}$ is a Wiener process and $\alpha,\mu,\sigma$ are positive constants. Assume that $2\alpha\mu > \sigma^2$. Then the maximum likelihood estimator for the couple $(\alpha,\mu)$ constructed by the continuous observations of $r$ over the interval $[0,T]$ has the following form
		\[
		\hat \alpha_T =\frac{ T  \int_{0}^{T} \frac{dr_t}{r_t} - \int_{0}^{T} \frac{dt}{r_t}\int_{0}^{T} dr_t}{ \int_{0}^{T}r_t dt \int_{0}^{T}\frac{dt}{r_t} - T^2},
\qquad
		\hat \mu_T =\frac{\int_{0}^{T}r_t dt \int_{0}^{T}\frac{dr_t}{r_t} - T(r_T-r_0)}{T  \int_{0}^{T} \frac{dr_t}{r_t} - \int_{0}^{T} \frac{dt}{r_t} (r_T-r_0)}.
		\]
The strong consistency of $(\hat \alpha_T, \hat \mu_T)$ follows from Theorem~\ref{th:consist-mle}, if we take into account the relations
\begin{equation}\label{equ:dif-param}
\alpha=b, \quad \mu=\frac{a}{b}, \quad \hat\alpha_T=\hat b_T, \quad \hat\mu_T=\frac{\hat a_T}{\hat b_T}.
\end{equation}
	\end{remark}
\vskip 3mm
    	
\noindent 4.   AN ALTERNATIVE APPROACH TO DRIFT PARAMETERS ESTIMATION

The disadvantage of the maximum likelihood estimators is that they work only if $a>\sigma^2/2$, however, a priori we do not know if this relation holds for the observed process.  To avoid this circumstance, in this section we will introduce a new estimator for the parameter $(a,b)$ based on the statistics $\int_0^T r_t\,dt$ and $\int_0^T r_t^2\,dt$.
First, we will prove that the convergence \eqref{ergo-3} remains valid in the case  $0<a\le\sigma^2/2$.
To this end, we start with several auxiliary results. The first result gives the asymptotics of  two normalized  Lebesgue integrals

\begin{lem}\label{l:conv-leb-int}
Let $a>0$, $b>0$, $\sigma>0$. Then the following normalized integrals asymptotically vanish as  $T\to\infty$:
\begin{gather}
\frac1T\int_0^T e^{-bt} r_t\,dt \to 0
\quad\text{a.\,s.,}
\label{equ:int-e-r}
\\
\frac1T e^{-2b T}\int_0^T e^{2bt} r_t\,dt \to 0
\quad\text{a.\,s.}
\label{equ:e-int-e-r}
\end{gather}
\end{lem}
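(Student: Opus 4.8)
The plan is to treat the two assertions \eqref{equ:int-e-r} and \eqref{equ:e-int-e-r} almost uniformly, since both are of the form $\frac1T\int_0^T w(t)\,r_t\,dt\to0$ for a weight $w$ that decays (respectively, when read backward from $T$). The key tool is the fact — available from Remark~\ref{rem:conv}, originally \citet[Thm.~1]{Deelstra1995} — that $\frac1T\int_0^T r_t\,dt\to a/b$ a.s.\ for \emph{all} positive $a,b,\sigma$, so in particular $I_T:=\int_0^T r_t\,dt$ grows linearly: $I_T/T\to a/b$ a.s. First I would fix a sample path in the event of full probability on which this convergence holds.

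For \eqref{equ:int-e-r}: write $\int_0^T e^{-bt} r_t\,dt=\int_0^T e^{-bt}\,dI_t$ and integrate by parts, $\int_0^T e^{-bt}\,dI_t = e^{-bT}I_T + b\int_0^T e^{-bt} I_t\,dt$. Since $I_t = O(t)$ along the fixed path, the first term is $O(T e^{-bT})\to 0$, and the integral $\int_0^T e^{-bt}I_t\,dt$ converges to the finite limit $\int_0^\infty e^{-bt}I_t\,dt<\infty$ (the integrand is integrable because $I_t=O(t)$ and $e^{-bt}$ decays exponentially). Hence $\int_0^T e^{-bt}r_t\,dt$ is bounded along the path, so after division by $T$ it tends to $0$. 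For \eqref{equ:e-int-e-r}: again write $J_T:=e^{-2bT}\int_0^T e^{2bt}r_t\,dt = e^{-2bT}\int_0^T e^{2bt}\,dI_t$, integrate by parts to get $J_T = I_T - 2b\,e^{-2bT}\int_0^T e^{2bt}I_t\,dt$, and then use $I_t=(a/b)t+o(t)$: the dominant contribution of $2b\,e^{-2bT}\int_0^T e^{2bt}(a/b)t\,dt$ is, by a direct evaluation (or L'Hôpital on the Laplace-type integral), $(a/b)T - (a/(2b^2)) + o(1)$, which cancels the leading term of $I_T$, leaving $J_T = O(1)$; dividing by $T$ gives the claim. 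The error term coming from $o(t)$ is handled by splitting the integral at a large cutoff $T_0$ (below which $e^{2bt}$ is bounded by $e^{2bT_0}$, contributing $O(e^{-2bT})$) and above which $|I_t-(a/b)t|\le\varepsilon t$, whose contribution after the same by-parts estimate is $\le C\varepsilon T$; letting $\varepsilon\to0$ after dividing by $T$ finishes it.

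Alternatively — and perhaps more cleanly for \eqref{equ:e-int-e-r} — I would avoid the cancellation entirely by a Cesàro/Toeplitz argument: $\frac1T e^{-2bT}\int_0^T e^{2bt}r_t\,dt$ is a weighted average of $\{r_t\}$ with a probability-density-like kernel $k_T(t)=\frac{2b}{T}\cdot\frac{e^{2bt}}{e^{2bT}-1}\cdot\frac{e^{2bT}-1}{2b}$ — more precisely, recognizing that $e^{-2bT}\int_0^T e^{2bt}\,dt\sim\frac1{2b}$, the measure $e^{2b(t-T)}\,dt$ on $[0,T]$ has total mass $\to\frac1{2b}$ and concentrates its mass near $t=T$ on an $O(1)$ window. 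One then compares with $\frac1T\int_{T-L}^T r_t\,dt$ for large $L$, which is close to $\frac{L}{T}\cdot\frac ab$ by the ergodic average, hence $O(L/T)\to0$. I expect the main obstacle to be the \eqref{equ:e-int-e-r} case: unlike \eqref{equ:int-e-r}, the exponential weight there \emph{grows}, so naive bounds $r_t\le\sup$ are useless (the process is a.s.\ unbounded), and one genuinely needs the linear-growth control on the \emph{integrated} process $I_T$ together with either the exact cancellation of leading terms or the concentration-of-mass argument. The by-parts identity converting $\int e^{2bt}r_t\,dt$ into an expression in $I_t$ is what makes the a.s.\ growth rate of $I_t$, rather than pointwise behavior of $r_t$, the relevant input, and that is the crux of the lemma.
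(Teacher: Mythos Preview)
Your argument is correct but follows a genuinely different route from the paper. For \eqref{equ:int-e-r} the paper splits the integral at $\sqrt{T}$ and bounds each piece using only $\frac{1}{T}\int_0^T r_t\,dt\to a/b$; your integration-by-parts approach instead shows the stronger fact that $\int_0^\infty e^{-bt}r_t\,dt<\infty$ a.s., which of course implies the claim after dividing by $T$. For \eqref{equ:e-int-e-r} the paper applies L'H\^opital's rule directly to $\int_0^T e^{2bt}r_t\,dt\big/(Te^{2bT})$, reducing the question to the pointwise fact $r_T/T\to0$ a.s.\ (extracted from the proof of Theorem~1 in \cite{Deelstra1995}); you instead integrate by parts and work with the cumulative $I_t=\int_0^t r_s\,ds$, using only the integrated statement $I_T/T\to a/b$ that Remark~\ref{rem:conv} already records. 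The paper's L'H\^opital argument is shorter and avoids the $\varepsilon$--$T_0$ bookkeeping, but it draws on an additional pointwise input about $r_T$; your approach is more self-contained in that it uses nothing beyond the ergodic average already stated in the paper, at the cost of a slightly longer computation. Your alternative concentration-of-mass argument for \eqref{equ:e-int-e-r} is also valid and in fact closest in spirit to the paper's $\sqrt{T}$-split for \eqref{equ:int-e-r}: splitting $\frac1T\int_0^T e^{-2b(T-t)}r_t\,dt$ at $t=T-L$, the tail is bounded by $e^{-2bL}\cdot\frac1T\int_0^T r_t\,dt$ and the head by $\frac1T(I_T-I_{T-L})\to0$, then let $L\to\infty$.
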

\begin{proof}
1. In order to prove the asymptotic relation \eqref{equ:int-e-r}, we rewrite the normalized integral as follows:
\[
\frac1T\int_0^T e^{-bt} r_t\,dt
= \frac1T\int_0^{\sqrt T} e^{-bt} r_t\,dt
+ \frac1T\int_{\sqrt T}^T e^{-bt} r_t\,dt.
\]
Then the first integral can be bounded in the following way:
\[
\frac1T\int_0^{\sqrt T} e^{-bt} r_t\,dt
\le \frac1T\int_0^{\sqrt T} r_t\,dt
=\frac{1}{\sqrt T} \cdot \frac{1}{\sqrt T}\int_0^{\sqrt T} r_t\,dt
\to 0
\quad\text{a.\,s., as } T\to\infty,
\]
since $\frac{1}{\sqrt T}\int_0^{\sqrt T} r_t\,dt
\to \frac{a}{b}$ a.\,s., as  $T\to\infty$, by \eqref{ergo-1}, see Remark~\ref{rem:conv}.
Furthermore, the second integral can be bounded as
\[
\frac1T\int_{\sqrt T}^T e^{-bt} r_t\,dt
\le e^{-b\sqrt T} \frac1T\int_{\sqrt T}^T  r_t\,dt
\le e^{-b\sqrt T} \frac1T\int_{0}^T  r_t\,dt
\to 0
\quad\text{a.\,s., as } T\to\infty,
\]
where the convergence follows from \eqref{ergo-1}.
Thus, relation  \eqref{equ:int-e-r} is proved.

2. Note that $\int_0^T e^{2bt} r_t\,dt\ge \int_0^T r_t\,dt \to \infty$ a.\,s., as $T\to\infty$, by \eqref{ergo-1}.
Therefore, applying the  L'H\^opital's rule, we conclude that
\[
\lim_{T\to\infty} \frac{\int_0^T e^{2bt} r_t\,dt}{T e^{2bT}}
=
\lim_{T\to\infty} \frac{e^{2bT} r_T}{e^{2bT} + 2b e^{2bT} T}
=
\lim_{T\to\infty} \frac{r_T}{1 + 2bT} \quad\text{a.\,s.}
\]
Now \eqref{equ:e-int-e-r} follows from the a.\,s.\ convergence $\frac{r_T}{T}\to0$, $T\to\infty$. The latter convergence  was established in the proof of Theorem 1 of   \cite{Deelstra1995}.
\end{proof}

The next result presents the bounds for the moments of the  related    stochastic integral.
\begin{lem}
Denote
\begin{equation}\label{equ:Z}
Z_t = \int_0^t e^{bu}\sqrt{r_u}\,dW_u, \quad t\ge0.
\end{equation}
There exists a constant $C>0$ such that for all $t\ge0$,
\begin{equation}\label{equ:E-bounds}
\E \left[Z_t^2\right] \le C e^{2bt},
\qquad
\E \left[Z_t^3\right] \le C e^{3bt},
\qquad
\E \left[Z^2_t r_t\right] \le C e^{2bt}.
\end{equation}
\end{lem}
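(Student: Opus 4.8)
The plan is to reduce all three estimates to the uniform-in-time boundedness of the ordinary moments of $r_t$, via an explicit representation of $Z_t$. First I would apply It\^o's formula to $e^{bt}r_t$: since $d(e^{bt}r_t)=be^{bt}r_t\,dt+e^{bt}\,dr_t=ae^{bt}\,dt+\sigma e^{bt}\sqrt{r_t}\,dW_t$, integrating over $[0,t]$ gives
\[
e^{bt}r_t = r_0 + \frac ab\left(e^{bt}-1\right) + \sigma Z_t,
\qquad\text{equivalently}\qquad
\sigma Z_t = e^{bt}r_t - c_t,
\]
where $c_t:=r_0+\frac ab(e^{bt}-1)$ is deterministic and satisfies $0\le c_t\le (r_0+\frac ab)e^{bt}$ for all $t\ge0$. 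Thus every moment of $Z_t$ is an explicit polynomial in $e^{bt}$, $c_t$ and the moments of $r_t$.

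The second ingredient is the classical fact that $m_p:=\sup_{t\ge0}\E[r_t^p]<\infty$ for every $p>0$; here only $p\in\{1,2,3\}$ is needed. This is standard for the CIR process: $r_t$ has a (scaled) noncentral chi-squared law, so all its moments are finite, and It\^o's formula applied to $r_t^p$ yields, after a routine localization argument to discard the martingale part,
\[
\frac{d}{dt}\E[r_t^p] = -bp\,\E[r_t^p] + \left(ap+\tfrac{p(p-1)}{2}\sigma^2\right)\E[r_t^{p-1}].
\]
Since the coefficient $-bp$ is negative, a one-line Gr\"onwall/comparison estimate shows that boundedness of $\E[r_t^{p-1}]$ forces boundedness of $\E[r_t^p]$, and the induction starts from $\E[r_t]=\frac ab+(r_0-\frac ab)e^{-bt}$, which is bounded. (Alternatively one may simply cite the known moment formulas for the CIR process.)

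It then remains to substitute $\sigma Z_t=e^{bt}r_t-c_t$ and expand, discarding the negative terms when an upper bound is wanted. For the second moment,
\[
\sigma^2\E[Z_t^2] = e^{2bt}\E[r_t^2]-2e^{bt}c_t\E[r_t]+c_t^2 \le e^{2bt}m_2 + c_t^2 \le C e^{2bt}.
\]
For the third moment, expanding the cube and dropping $-3e^{2bt}c_t\E[r_t^2]\le0$ and $-c_t^3\le0$,
\[
\sigma^3\E[Z_t^3] = e^{3bt}\E[r_t^3]-3e^{2bt}c_t\E[r_t^2]+3e^{bt}c_t^2\E[r_t]-c_t^3 \le e^{3bt}m_3 + 3e^{bt}c_t^2 m_1 \le C e^{3bt}.
\]
Finally $\sigma^2 Z_t^2 r_t = e^{2bt}r_t^3-2e^{bt}c_t r_t^2 + c_t^2 r_t$, and since $-2e^{bt}c_t r_t^2\le 0$ we get $\sigma^2\E[Z_t^2 r_t]\le e^{2bt}\E[r_t^3]+c_t^2\E[r_t]\le e^{2bt}m_3+c_t^2 m_1\le C e^{2bt}$. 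Enlarging $C$ to absorb $\sigma^{-2}$, $\sigma^{-3}$ and the constant $(r_0+a/b)^2$ completes the proof.

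The main obstacle is the uniform control of $\E[r_t^p]$: one must take care that the stochastic integrals appearing in the It\^o expansions of $r_t^p$ are genuine martingales (or argue via localization and Fatou's lemma) before running the induction on $p$; everything afterward is bookkeeping. An alternative to the representation of $Z_t$ is to use the Burkholder--Davis--Gundy inequality together with Jensen's inequality for the probability measure $e^{2bu}\,du\big/\int_0^t e^{2bu}\,du$ on $[0,t]$, which gives $\E|Z_t|^3\le C(\int_0^t e^{2bu}\,du)^{1/2}\int_0^t e^{2bu}\E[r_u^{3/2}]\,du\le Ce^{3bt}$, and similarly for the other two bounds; but the representation route is shorter and also delivers the mixed moment $\E[Z_t^2 r_t]$ directly.
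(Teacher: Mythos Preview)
Your proof is correct, but it takes a different route from the paper's. Both arguments start from the same representation $\sigma Z_t=e^{bt}r_t-c_t$ (the paper writes it as $r_t=e^{-bt}(r_0-\tfrac ab+\tfrac ab e^{bt}+\sigma Z_t)$), and both handle the mixed moment $\E[Z_t^2 r_t]$ by substituting this relation and invoking the first two bounds. The difference lies in how $\E Z_t^2$ and $\E Z_t^3$ are controlled. You reduce everything to the uniform-in-time bounds $m_p=\sup_{t\ge0}\E[r_t^p]<\infty$ for $p\le3$ and then do pure algebra; the paper instead computes $\E Z_t^2$ exactly via the It\^o isometry and the explicit formula for $\E r_t$, and for $\E Z_t^3$ applies It\^o's formula to $Z_t^3$ to obtain $\E Z_t^3=3\sigma\int_0^t e^{bs}\E Z_s^2\,ds$ (after using $\E Z_s=0$), which is then bounded by the first estimate. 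So the paper bootstraps moments of $Z$ directly and only ever uses $\E r_t$, whereas you offload the work onto the higher moments of $r_t$. Your approach is shorter and avoids the It\^o expansion of $Z_t^3$; the paper's is more self-contained in that it does not require establishing the uniform boundedness of $\E r_t^2$ and $\E r_t^3$ as a separate input.
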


\begin{proof}
1. According to \cite[Eq.~(1)]{Deelstra1995}, the process $r$ satisfies the following relations
\begin{align}
r_t &= e^{-bt}\left(r_0 + a\int_0^te^{bu}\,du + \sigma \int_0^t e^{bu} \sqrt{r_u}\,dW_u\right)
\notag\\
&= e^{-bt}\left(r_0 - \frac{a}{b} + \frac{a}{b}e^{bt} + \sigma \int_0^t e^{bu} \sqrt{r_u}\,dW_u\right).
\label{equ:CIR-2}
\end{align}
Therefore, in particular,  its expectation equals
\[
\E r_t = \left(r_0 - \frac{a}{b}\right) e^{-bt} + \frac{a}{b}.
\]
Then for the 2nd moment we have the following representation:
\begin{align*}
\E Z_t^2 &= \E\left[\left(\int_0^t e^{bu}\sqrt{r_u}\,dW_u\right)^2\right]
= \E\left[\int_0^t e^{2bu}r_u\,du\right]
= \int_0^t \left (\left(r_0 - \frac{a}{b}\right) e^{bu} + \frac{a}{b}e^{2bu}\right) du
\\
&= \left(r_0 - \frac{a}{b}\right) \frac{e^{bt}-1}{b}
+ \frac{a}{b}\cdot\frac{e^{2bt} - 1}{2b}
= e^{2bt}\left(\frac{a}{{2b^2}} + \frac1b\left(r_0 - \frac{a}{b}\right) e^{-bt}
-\frac1b\left(r_0 - \frac{a}{2b}\right) e^{-2bt}\right).
\end{align*}
Consequently, $\E Z_t^2 \le Ce^{2bt}$
with $C = \frac{a}{{2b^2}} + \frac1b\abs{r_0 - \frac{a}{b}}
+\frac1b\abs{r_0 - \frac{a}{2b}}$.

2. Let us consider $\E Z_t^3$.
By It\^o's formula, from \eqref{equ:Z} we have
\begin{equation}\label{equ:Z3}
Z_t^3 = 3 \int_0^t Z_s e^{2bs} r_s\,ds + 3\int_0^t Z_s^2 e^{bs}\sqrt{r_s}\,dW_s.
\end{equation}
Note that according to \citet[Problem 3.15, p.~306]{KaratzasShreve91} \citep[see also][Thm.~9.3]{MishuraShevchenko2017}, for any $p\ge1$, $\E\left [\sup_{t\in[0,T]}\abs{r_t}^{2p}\right ]<\infty$.
It follows from \eqref{equ:CIR-2} and \eqref{equ:Z} that
\begin{equation}\label{equ:r-via-Z}
r_s = e^{-bs}\left(r_0 - \frac{a}{b} + \frac{a}{b}e^{bs} + \sigma Z_s\right)
\end{equation}
Therefore, we have also that $\E\left [\sup_{t\in[0,T]}\abs{Z_t}^{2p}\right ]<\infty$ for any $p\ge1$.
Consequently, all the terms in the equality \eqref{equ:Z3} have bounded expectations.

Hence,
\begin{equation}\label{equ:EZ3}
\E \left[Z_t^3\right] = 3\E \int_0^t Z_s e^{2bs} r_s\,ds.
\end{equation}
We insert \eqref{equ:r-via-Z} into \eqref{equ:EZ3} and obtain
\[
\E \left[Z_t^3\right] = 3\E \int_0^t Z_s e^{bs} \left(r_0 - \frac{a}{b} + \frac{a}{b}e^{bs} + \sigma Z_s\right)\,ds
= 3\sigma \int_0^t e^{bs}  \E\left[Z_s^2\right]\,ds,
\]
since $\E Z_s = 0$.
Applying the bound $\E[Z_s^2] \le C e^{2bs}$ from \eqref{equ:E-bounds}, we get
\[
\E \left[Z_t^3\right] \le 3\sigma C \int_0^t e^{3bs}\,ds
= \frac{C\sigma}{b}\left(e^{3bt}-1\right)
\le \frac{C\sigma}{b} e^{3bt}.
\]

3. We express $r_t$ through $Z_t$ by \eqref{equ:r-via-Z}, and get
\[
\E \left[Z^2_t r_t\right]
=\left(\left(r_0 - \frac{a}{b}\right)e^{-bt} + \frac{a}{b}\right) \E Z_t^2
+ \sigma e^{-bt}\E Z_t^3
\le C e^{2b t},
\]
where the inequality follows from the first two bounds in \eqref{equ:E-bounds}.
\end{proof}

\begin{lem}\label{l:conv-stoch-int}
Let the process $Z$ be defined by \eqref{equ:Z}.
Then the following normalized stochastic integrals vanish as $T\to\infty$:
\begin{gather}
\frac1T\int_0^T e^{-bt} Z_t \sqrt{r_t}\,dW_t \to 0 \quad\text{a.\,s.},
\label{equ:remainder1}
\\
\frac1T e^{-2bT}\int_0^T e^{bt} Z_t \sqrt{r_t}\,dW_t \to 0 \quad\text{a.\,s.}
\label{equ:remainder2}
\end{gather}
\end{lem}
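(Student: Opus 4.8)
The plan is to treat both stochastic integrals \eqref{equ:remainder1} and \eqref{equ:remainder2} by the same device: exhibit each as a continuous local martingale (or a continuous martingale) and apply the strong law of large numbers for such martingales, exactly as in the proof of Theorem~\ref{th:consist-mle}. The only work is to control the quadratic variation, and for that the moment bounds \eqref{equ:E-bounds} together with the two Lebesgue-integral asymptotics in Lemma~\ref{l:conv-leb-int} are tailor-made.

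For \eqref{equ:remainder1}, set $M_T=\int_0^T e^{-bt}Z_t\sqrt{r_t}\,dW_t$, a continuous local martingale with $\langle M\rangle_T=\int_0^T e^{-2bt}Z_t^2 r_t\,dt$. First I would show $\frac1T\langle M\rangle_T\to 0$ a.s.: taking expectations and using $\E[Z_t^2 r_t]\le Ce^{2bt}$ from \eqref{equ:E-bounds} gives $\E\langle M\rangle_T\le C\int_0^T e^{-2bt}e^{2bt}\,dt = CT$, which is not by itself enough, so instead I would argue pathwise. Using the representation \eqref{equ:r-via-Z}, $Z_t^2 r_t = e^{-bt}\big((r_0-\tfrac ab)+\tfrac ab e^{bt}+\sigma Z_t\big)Z_t^2$; the dominant contribution to $\langle M\rangle_T$ is $\tfrac ab\int_0^T e^{-2bt}Z_t^2\,dt$ and, since $r_t=e^{-bt}(\cdots+\sigma Z_t)$, the quantity $e^{-2bt}Z_t^2$ is comparable to $r_t^2$ up to lower-order terms; hence $\langle M\rangle_T$ is, up to a.s.-negligible corrections, a constant multiple of $\int_0^T r_t^2\,dt$. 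By the convergence \eqref{ergo-3} (valid for all positive $a,\sigma$ by Theorem~\ref{th:int-r2}), $\frac1T\int_0^T r_t^2\,dt$ converges a.s., so $\frac1T\langle M\rangle_T$ converges a.s.; in particular $\langle M\rangle_T/T$ is a.s.\ bounded. Then the martingale strong law (\cite{lipshir}, as used above) combined with Kronecker's lemma yields $M_T/T\to 0$ a.s. A cleaner alternative, avoiding Theorem~\ref{th:int-r2}, is: on the event $\{\langle M\rangle_\infty<\infty\}$, $M_T$ converges and $M_T/T\to0$ trivially; on $\{\langle M\rangle_\infty=\infty\}$, the martingale law gives $M_T/\langle M\rangle_T\to0$, so it suffices that $\langle M\rangle_T = o(T)$ or $O(T)$ a.s., which follows from the comparison with $\int_0^T r_t^2\,dt$ just described. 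Either way, \eqref{equ:remainder1} follows.

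For \eqref{equ:remainder2}, set $N_T=\int_0^T e^{bt}Z_t\sqrt{r_t}\,dW_t$; its quadratic variation is $\langle N\rangle_T=\int_0^T e^{2bt}Z_t^2 r_t\,dt$. Again insert \eqref{equ:r-via-Z}: the leading term is $\tfrac ab\int_0^T e^{3bt}Z_t^2\,dt$, and $e^{3bt}Z_t^2 = e^{bt}\cdot(e^{bt}Z_t)^2$, which by \eqref{equ:r-via-Z} is comparable to $e^{bt}\cdot e^{2bt}r_t^2/\sigma^2 = e^{3bt}r_t^2/\sigma^2$ up to lower order — so $\langle N\rangle_T$ grows like $e^{3bT}$ (more precisely $\E\langle N\rangle_T\le C\int_0^T e^{2bt}e^{2bt}\,dt\asymp e^{4bT}$, and pathwise one expects the sharper $e^{3bT}$ once the $r_t^2$ comparison and L'Hôpital are invoked). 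The point is only that $\langle N\rangle_T\to\infty$ a.s.\ while $e^{-4bT}\langle N\rangle_T$ stays a.s.\ bounded; then the martingale strong law gives $N_T/\langle N\rangle_T\to0$, hence $e^{-2bT}N_T/T = (N_T/\langle N\rangle_T)\cdot(\langle N\rangle_T e^{-2bT}/T)$, and one checks the second factor is a.s.\ bounded (indeed $e^{-2bT}\langle N\rangle_T/T\to0$ would follow from $\E\langle N\rangle_T\le Ce^{4bT}$ only in expectation, so here I would instead use the pathwise identity $e^{-2bt}Z_t^2 r_t = e^{-2bt}\cdot(\text{comparable to }r_t^2)\cdot e^{2bt}$ and Lemma~\ref{l:conv-leb-int}, specifically \eqref{equ:e-int-e-r} applied to the process $r^2$ or a variant). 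This bookkeeping — getting the exponential rate in $\langle N\rangle_T$ sharp enough that the normalization by $Te^{2bT}$ kills it — is the main obstacle; everything else is an application of results already in hand. I would organize the final write-up around a single lemma: $e^{-2bt}Z_t^2 r_t \le c_1 r_t^2 + (\text{a.s.\ integrable remainder})$ and $Z_t^2 r_t \le e^{2bt}(c_2 r_t^2 + \text{remainder})$, proved from \eqref{equ:r-via-Z}, and then feed these into Lemma~\ref{l:conv-leb-int}, \eqref{ergo-3}, and the martingale strong law.
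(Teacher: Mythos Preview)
Your proposal has a circularity problem. You invoke Theorem~\ref{th:int-r2} (the a.s.\ convergence of $\frac1T\int_0^T r_t^2\,dt$) to control $\langle M\rangle_T/T$, but in the paper Lemma~\ref{l:conv-stoch-int} is a \emph{prerequisite} for Theorem~\ref{th:int-r2}: the term $I_{3,2}$ in that proof is dispatched precisely by this lemma. Your ``cleaner alternative'' does not escape the loop, since it still rests on comparing $\langle M\rangle_T$ with $\int_0^T r_t^2\,dt$ and hence on already knowing the latter is $O(T)$ a.s. In the regime $2a>\sigma^2$ ergodicity would rescue you, but the whole point of this section is to cover all positive $a,b,\sigma$. (There is also a smaller slip: after substituting $r_t$ via \eqref{equ:r-via-Z} into $Z_t^2 r_t$, the term $\sigma e^{-bt}Z_t^3$ is not of lower order than $\tfrac ab Z_t^2$; the pathwise comparison is really with $\int_0^T r_t^3\,dt$, not $\int_0^T r_t^2\,dt$, which only deepens the circularity.) The bookkeeping for \eqref{equ:remainder2} is left even vaguer and would inherit the same difficulty.

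The paper sidesteps any pathwise control of $\langle M\rangle_T$ and uses the moment bound $\E[Z_t^2 r_t]\le Ce^{2bt}$ from \eqref{equ:E-bounds} directly. By Kronecker's lemma, $\frac1{T+1}\int_0^T e^{-bt}Z_t\sqrt{r_t}\,dW_t\to0$ a.s.\ follows once the martingale $\int_0^T \frac{e^{-bt}Z_t\sqrt{r_t}}{t+1}\,dW_t$ converges as $T\to\infty$, and it is $L^2$-bounded because
\[
\E\int_0^\infty \frac{e^{-2bt}Z_t^2 r_t}{(t+1)^2}\,dt\le C\int_0^\infty(t+1)^{-2}\,dt<\infty.
\]
For \eqref{equ:remainder2} the Kronecker normalizer is $(t+1)e^{2bt}$, and the resulting integral $\int_0^\infty \frac{e^{bt}Z_t\sqrt{r_t}}{(t+1)e^{2bt}}\,dW_t$ coincides with the one just handled. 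The moment lemma \eqref{equ:E-bounds} is there precisely so that this argument can be run without any appeal to the pathwise growth of $\int_0^T r_t^p\,dt$.
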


\begin{proof}
1. Obviously, the convergence \eqref{equ:remainder1} is equivalent to
\[
\frac{1}{T+1}\int_0^T e^{-bt} Z_t \sqrt{r_t}\,dW_t \to 0
\quad\text{a.\,s., as } T\to\infty.
\]
By Kronecker's lemma  \citep[see, e.\,g.,][]{Deelstra1995}, in order to prove this convergence it suffices to show that
\begin{equation}\label{equ:char}
\int_0^\infty \frac{e^{-bt} Z_t \sqrt{r_t}}{t+1}\,dW_t <\infty
\quad\text{a.\,s.}
\end{equation}
Since the process $M_T = \int_0^T \frac{e^{-bt} Z_t \sqrt{r_t}}{t+1}\,dW_t$ is a martingale, it suffices to prove that
\begin{equation}\label{equ:char1}
\E \langle M\rangle_T = \E \left[\int_0^T \frac{e^{-2bt} Z_t^2 r_t}{(t+1)^2}\,dt\right ] <\infty.
\end{equation}
But \eqref{equ:char1}  follows immediately from the third bound of \eqref{equ:E-bounds}:
\[
\E \left[\int_0^T \frac{e^{-2bt} Z_t^2 r_t}{(t+1)^2}\,dt\right ]
\le C\int_0^T \frac{1}{(t+1)^2}\,dt
= C\left (1-\frac{1}{T+1}\right )\le C.
\]
Thus, \eqref{equ:remainder1} is proved.

2. Similarly, by Kronecker's lemma, the convergence \eqref{equ:remainder2} follows from the existence a.\,s. of the integral
\[
\int_0^\infty \frac{e^{-bt} Z_t \sqrt{r_t}}{(t+1)e^{2bt}}\,dW_t
= \int_0^\infty \frac{e^{-bt} Z_t \sqrt{r_t}}{t+1}\,dW_t.
\]
This integral is finite, because it coincides with the integral \eqref{equ:char} considered in the first part of the proof.
\end{proof}

Now we are ready to prove the announced asymptotic result for $\frac1T\int_0^T r_t^2\,dt$.
\begin{thm}\label{th:int-r2}
The following convergence holds:
\[
\frac1T\int_0^T r_t^2\,dt \to \frac{a^2}{b^2} + \frac{a\sigma^2}{2b^2}
\quad\text{a.\,s., as } T\to\infty.
\]
\end{thm}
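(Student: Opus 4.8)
\emph{Proof plan.}\quad The plan is to exploit the explicit representation of the solution: by \eqref{equ:r-via-Z}, writing $c = r_0 - \frac{a}{b}$, we have $r_t = ce^{-bt} + \frac{a}{b} + \sigma e^{-bt} Z_t$ with $Z_t$ as in \eqref{equ:Z}. Squaring this identity, integrating over $[0,T]$ and dividing by $T$, one splits $\frac1T\int_0^T r_t^2\,dt$ into four kinds of contributions: (i) the constant $\frac{a^2}{b^2}$, which gives exactly $\frac{a^2}{b^2}$; (ii) the purely deterministic terms carrying factors $e^{-bt}$ or $e^{-2bt}$, whose normalized integrals are $O(1/T)$; (iii) the two ``cross'' terms $\frac{2\sigma a}{b}\cdot\frac1T\int_0^T e^{-bt}Z_t\,dt$ and $2\sigma c\cdot\frac1T\int_0^T e^{-2bt}Z_t\,dt$; and (iv) the quadratic term $\frac{\sigma^2}{T}\int_0^T e^{-2bt}Z_t^2\,dt$, which is expected to contribute the missing summand $\frac{a\sigma^2}{2b^2}$. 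The cross terms in (iii) I would dispose of by substituting $\sigma e^{-bt}Z_t = r_t - \frac{a}{b} - ce^{-bt}$, which is \eqref{equ:r-via-Z} rearranged: this rewrites $\frac1T\int_0^T e^{-bt}Z_t\,dt$ through $\frac1T\int_0^T r_t\,dt\to\frac{a}{b}$ (by \eqref{ergo-1}, see Remark~\ref{rem:conv}) plus explicit $O(1/T)$ terms, and $\frac1T\int_0^T e^{-2bt}Z_t\,dt$ through $\frac1T\int_0^T e^{-bt}r_t\,dt\to0$ (by \eqref{equ:int-e-r}) plus $O(1/T)$ terms; both therefore vanish.

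The heart of the argument is term (iv). Applying It\^o's formula to $e^{-2bt}Z_t^2$, with $dZ_t = e^{bt}\sqrt{r_t}\,dW_t$ and $d\langle Z\rangle_t = e^{2bt}r_t\,dt$, yields
\[
e^{-2bT}Z_T^2 = \int_0^T r_t\,dt + 2\int_0^T e^{-bt}Z_t\sqrt{r_t}\,dW_t - 2b\int_0^T e^{-2bt}Z_t^2\,dt,
\]
hence
\[
\frac{\sigma^2}{T}\int_0^T e^{-2bt}Z_t^2\,dt = \frac{\sigma^2}{2b}\left( \frac1T\int_0^T r_t\,dt + \frac2T\int_0^T e^{-bt}Z_t\sqrt{r_t}\,dW_t - \frac{e^{-2bT}Z_T^2}{T} \right).
\]
On the right-hand side the first term tends to $\frac{\sigma^2}{2b}\cdot\frac{a}{b} = \frac{a\sigma^2}{2b^2}$ by \eqref{ergo-1}, and the second tends to $0$ by \eqref{equ:remainder1}. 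For the boundary term I would decompose $Z_T^2 = \langle Z\rangle_T + 2\int_0^T Z_s e^{bs}\sqrt{r_s}\,dW_s = \int_0^T e^{2bs}r_s\,ds + 2\int_0^T Z_s e^{bs}\sqrt{r_s}\,dW_s$, so that $\frac1T e^{-2bT}Z_T^2$ equals $\frac1T e^{-2bT}\int_0^T e^{2bs}r_s\,ds + \frac2T e^{-2bT}\int_0^T Z_s e^{bs}\sqrt{r_s}\,dW_s$, and these two pieces vanish by \eqref{equ:e-int-e-r} and \eqref{equ:remainder2} respectively. Summing the contributions (i)--(iv) then gives $\frac1T\int_0^T r_t^2\,dt \to \frac{a^2}{b^2} + \frac{a\sigma^2}{2b^2}$ a.s.

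The only delicate point is precisely the boundary term $\frac1T e^{-2bT}Z_T^2$ (equivalently $\frac{r_T^2}{T}$ up to lower-order terms): the ergodic relation $\frac1T\int_0^T r_t\,dt\to\frac{a}{b}$ and the known $r_T/T\to0$ only give $r_T^2/T = o(T)$, which is far too weak, and this is exactly why the ``weighted'' statements \eqref{equ:e-int-e-r} and \eqref{equ:remainder2} were prepared --- one must pass through the semimartingale decomposition of $Z_T^2$ and treat the drift and the martingale parts separately. The remaining bookkeeping (the $O(1/T)$ estimates, and the fact that all the invoked almost sure convergences can be realized on one common event) is routine. As an alternative one could avoid the representation of $r_t$ and apply It\^o's formula directly to $r_t^2$, obtaining $2b\int_0^T r_t^2\,dt = (2a+\sigma^2)\int_0^T r_t\,dt + r_0^2 - r_T^2 + 2\sigma\int_0^T r_t^{3/2}\,dW_t$; this is formally shorter but then requires both $r_T^2/T\to0$ (again via the decomposition of $e^{-2bT}Z_T^2$) and $\frac1T\int_0^T r_t^{3/2}\,dW_t\to0$, the latter via Kronecker's lemma together with the uniform bound $\sup_{t\ge0}\E r_t^3<\infty$, which follows from \eqref{equ:r-via-Z} and the moment estimates \eqref{equ:E-bounds}.
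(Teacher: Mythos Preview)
Your argument is correct and follows essentially the same route as the paper: the same representation $r_t = ce^{-bt} + \frac{a}{b} + \sigma e^{-bt}Z_t$, the same three-way split into deterministic, cross, and quadratic parts, the same substitution $\sigma e^{-bt}Z_t = r_t - \frac{a}{b} - ce^{-bt}$ to kill the cross terms, and exactly the same four auxiliary convergences \eqref{ergo-1}, \eqref{equ:int-e-r}, \eqref{equ:e-int-e-r}, \eqref{equ:remainder1}, \eqref{equ:remainder2} to finish. The only cosmetic difference is in the treatment of the quadratic term $\frac{\sigma^2}{T}\int_0^T e^{-2bt}Z_t^2\,dt$: you apply It\^o's formula to $e^{-2bt}Z_t^2$ and then handle the resulting boundary term $\frac1T e^{-2bT}Z_T^2$ via the decomposition of $Z_T^2$, whereas the paper applies It\^o to $Z_t^2$ first and then integrates the double integrals by Fubini --- the two computations are algebraically equivalent and produce the same four pieces.
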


\begin{proof}
Using   relation \eqref{equ:CIR-2}, we get the following equalities:
\begin{align}
\frac1T\int_0^T r_t^2\,dt
&= \frac1T\int_0^T e^{-2bt}\left(r_0 - \frac{a}{b} + \frac{a}{b}e^{bt} + \sigma \int_0^t e^{bu} \sqrt{r_u}\,dW_u\right)^2\,dt
\notag\\
&= \frac1T\int_0^T e^{-2bt}\left(r_0 - \frac{a}{b} + \frac{a}{b}e^{bt}\right)^2\,dt
\notag\\
&\quad+\frac{2\sigma}{T}\int_0^T e^{-2bt}\left(r_0 - \frac{a}{b} + \frac{a}{b}e^{bt}\right)\left(\int_0^t e^{bu} \sqrt{r_u}\,dW_u\right)\,dt
\notag\\
&\quad+ \frac{\sigma^2}{T}\int_0^T e^{-2bt}\left(\int_0^t e^{bu} \sqrt{r_u}\,dW_u\right)^2\,dt
\notag\\
&\eqqcolon I_1 + I_2 + I_3.
\label{equ:I-123}
\end{align}

The term $I_1$ is the subject of  straightforward calculations:
\begin{align*}
I_1 &= \frac1T\int_0^T e^{-2bt}\left(r_0 - \frac{a}{b} + \frac{a}{b}e^{bt}\right)^2\,dt
\\
&= \frac1T\int_0^T \left(\left(r_0 - \frac{a}{b}\right)^2e^{-2bt}
+ \frac{2a}{b}\left(r_0 - \frac{a}{b}\right)e^{-bt}
+\frac{a^2}{b^2}\right) dt.
\\
&= \left(r_0 - \frac{a}{b}\right)^2 \frac{1-e^{-2bT}}{2b T}
+ \frac{2a}{b}\left(r_0 - \frac{a}{b}\right)\frac{1-e^{-bT}}{b T}
+\frac{a^2}{b^2}.
\end{align*}
Evidently, the following convergence holds:
\begin{equation}\label{equ:I_1-conv}
I_1 \to \frac{a^2}{b^2}, \quad T\to\infty.
\end{equation}

Let us consider $I_2$. The equality \eqref{equ:CIR-2} implies that
\[
\sigma \int_0^t e^{bu} \sqrt{r_u}\,dW_u
= e^{bt}r_t-r_0 + \frac{a}{b} - \frac{a}{b}e^{bt}.
\]
Consequently, integral $I_2$ can be transformed as follows:
\begin{align}
I_2 &= \frac{2\sigma}{T}\int_0^T e^{-2bt}\left(r_0 - \frac{a}{b} + \frac{a}{b}e^{bt}\right)\left(\int_0^t e^{bu} \sqrt{r_u}\,dW_u\right)\,dt
\notag\\
&= \frac{2}{T}\int_0^T e^{-2bt}\left(r_0 - \frac{a}{b} + \frac{a}{b}e^{bt}\right)\left(e^{bt}r_t-r_0 + \frac{a}{b} - \frac{a}{b}e^{bt}\right )\,dt
\notag\\
&= \frac{2}{T}\left(r_0 - \frac{a}{b}\right)\int_0^T e^{-bt}r_t\,dt
+ \frac{2a}{bT}\int_0^T r_t\,dt
- \frac{2}{T}\int_0^T e^{-2bt}\left(r_0 - \frac{a}{b} + \frac{a}{b}e^{bt}\right)^2\,dt.
\label{equ:I_2}
\end{align}
Asymptotic relation  \eqref{equ:int-e-r} implies that the first term in the right-hand side of \eqref{equ:I_2} converges to zero a.\,s.\ as $T\to\infty$.
By \eqref{ergo-1}, for the second term we have
\[
\frac{2a}{bT}\int_0^T r_t\,dt \to \frac{2a^2}{b^2} \quad\text{a.\,s., as } T\to\infty.
\]
Finally, the third term equals $-2I_1$, therefore, by \eqref{equ:I_1-conv},
\[
- \frac{2}{T}\int_0^T e^{-2bt}\left(r_0 - \frac{a}{b} + \frac{a}{b}e^{bt}\right)^2\,dt = -2I_1 \to -\frac{2a^2}{b^2}, \quad\text{as } T\to\infty.
\]
Thus,
\begin{equation}\label{equ:I_2-conv}
I_2 \to 0  \quad\text{a.\,s., as } T\to\infty.
\end{equation}

It remains to study the asymptotic behavior of $I_3$.
Note that
\[
I_3 = \frac{\sigma^2}{T}\int_0^T e^{-2bt}Z_t^2\,dt,
\]
where $Z_t$ is defined in \eqref{equ:Z}.
By It\^o's formula, from \eqref{equ:Z} we get
\[
Z_t^2 = \int_0^t e^{2bs} r_s\,ds + 2\int_0^t Z_s e^{bs}\sqrt{r_s}\,dW_s.
\]
Therefore we can present $I_3 $  as the sum
\begin{equation}\label{equ:I_3}
I_3 = \frac{\sigma^2}{T}\int_0^T e^{-2bt}\int_0^t e^{2bs} r_s\,ds\,dt
+\frac{2\sigma^2}{T}\int_0^T e^{-2bt}\int_0^t Z_s e^{bs}\sqrt{r_s}\,dW_s\,dt
\eqqcolon I_{3,1} + I_{3,2}.
\end{equation}
Using the Fubini theorem, we transform $I_{3,1}$ as follows:
\begin{align*}
I_{3,1} &= \frac{\sigma^2}{T}\int_0^T e^{2bs}r_s\int_s^T e^{-2bt}\,dt\,ds
= \frac{\sigma^2}{T}\int_0^T e^{2bs}r_s\frac{e^{-2bs}-e^{-2bT}}{2b}\,ds
\\
&= \frac{\sigma^2}{2bT}\int_0^T r_s\,ds
- \frac{\sigma^2}{2bT}e^{-2bT}\int_0^T e^{2bs}r_s\,ds.
\end{align*}
Using   asymptotic relations   \eqref{ergo-1} and \eqref{equ:e-int-e-r}, we obtain
\begin{equation}\label{equ:I31-conv}
I_{3,1} \to \frac{a\sigma^2}{2b^2},
\quad\text{a.\,s., as } T\to\infty.
\end{equation}
Changing the order of integration, we rewrite $I_{3,2}$ in the following way:
\begin{align*}
I_{3,2} &= \frac{2\sigma^2}{T}\int_0^T  Z_s e^{bs}\sqrt{r_s} \int_s^T e^{-2bt}\,dt\,dW_s
= \frac{2\sigma^2}{T}\int_0^T  Z_s e^{bs}\sqrt{r_s}\, \frac{e^{-2bs} - e^{-2bT}}{2b}\,dt\,dW_s
\\
&= \frac{\sigma^2}{bT}\int_0^T  e^{-bs} Z_s \sqrt{r_s}\,dt\,dW_s
- \frac{\sigma^2}{bT} e^{-2bT}\int_0^T e^{bs} Z_s \sqrt{r_s}\,dt\,dW_s.
\end{align*}
According to Lemma~\ref{l:conv-stoch-int}, both stochastic integrals in the right-hand side converge to zero a.\,s. Therefore, $I_{3,2}\to0$ a.\,s., as $T\to\infty$. Combining this with \eqref{equ:I_3} and \eqref{equ:I31-conv}, we see that
\begin{equation}\label{equ:I_3-conv}
I_3 \to \frac{a\sigma^2}{2b^2}  \quad\text{a.\,s., as } T\to\infty.
\end{equation}

Finally, inserting the convergences \eqref{equ:I_1-conv}, \eqref{equ:I_2-conv}, \eqref{equ:I_3-conv} into the equality \eqref{equ:I-123}, we conclude the proof.
\end{proof}

Theorem~\ref{th:int-r2} enables to construct a strongly consistent estimator for the parameter $(a,b)$.

\begin{thm}\label{th:consist-new}
Define
\begin{align*}
\tilde a_T &= \frac{\sigma^2}{2}\cdot\frac{\left(\int_0^T r_t\,dt\right)^2}{T \int_0^T r_t^2 \,dt - \left(\int_0^T r_t\,dt\right)^2},
\\
\tilde b_T &= \frac{\sigma^2}{2}\cdot\frac{T \int_0^T r_t\,dt}{T \int_0^T r_t^2 \,dt - \left(\int_0^T r_t\,dt\right)^2}.
\end{align*}
Then vector  $(\tilde a_T, \tilde b_T)$ is a strongly consistent estimator of $(a,b)$.
\end{thm}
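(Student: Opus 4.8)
The plan is to reduce the whole statement to the a.\,s.\ limits already established. Set
\[
\eta_T \coloneqq \frac1T\int_0^T r_t\,dt,
\qquad
\zeta_T \coloneqq \frac1T\int_0^T r_t^2\,dt .
\]
By \eqref{ergo-1} together with Remark~\ref{rem:conv}, $\eta_T \to a/b$ a.\,s.\ as $T\to\infty$ for all positive $a,b,\sigma$; by Theorem~\ref{th:int-r2} (this is precisely \eqref{ergo-3} extended to the full parameter range, i.\,e.\ without assuming $2a>\sigma^2$), $\zeta_T \to a^2/b^2 + a\sigma^2/(2b^2)$ a.\,s.\ as $T\to\infty$.

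First I would rewrite both estimators in terms of $\eta_T$ and $\zeta_T$ by dividing the numerator and the denominator of each fraction by $T^2$. Since $\bigl(\int_0^T r_t\,dt\bigr)^2 = T^2\eta_T^2$ and $T\int_0^T r_t^2\,dt - \bigl(\int_0^T r_t\,dt\bigr)^2 = T^2\bigl(\zeta_T - \eta_T^2\bigr)$, this gives
\[
\tilde a_T = \frac{\sigma^2}{2}\cdot\frac{\eta_T^2}{\zeta_T - \eta_T^2},
\qquad
\tilde b_T = \frac{\sigma^2}{2}\cdot\frac{\eta_T}{\zeta_T - \eta_T^2}.
\]
Next I would identify the limit of the common denominator. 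From the two convergences above,
\[
\zeta_T - \eta_T^2 \;\to\; \frac{a^2}{b^2} + \frac{a\sigma^2}{2b^2} - \frac{a^2}{b^2} \;=\; \frac{a\sigma^2}{2b^2} \;>\;0
\quad\text{a.\,s., as } T\to\infty.
\]
Probabilistically this quantity is just the variance of the stationary gamma law \eqref{gamma}, which is strictly positive; it is the analogue of a strict Cauchy--Schwarz inequality and is exactly what makes the estimators well defined for all large $T$. In particular, almost surely the denominator is eventually bounded away from $0$, so the quotients are eventually well defined and we may pass to the limit.

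Finally, applying the elementary arithmetic of a.\,s.\ limits (sums, products, and quotients with nonzero limit) to the two displayed formulas, I obtain
\[
\tilde a_T \to \frac{\sigma^2}{2}\cdot\frac{a^2/b^2}{a\sigma^2/(2b^2)} = a,
\qquad
\tilde b_T \to \frac{\sigma^2}{2}\cdot\frac{a/b}{a\sigma^2/(2b^2)} = b,
\quad\text{a.\,s., as } T\to\infty,
\]
which is the asserted strong consistency. As for the main obstacle: there is essentially none left once Theorem~\ref{th:int-r2} is available --- that result (the a.\,s.\ asymptotics of $\frac1T\int_0^T r_t^2\,dt$ valid for \emph{all} positive parameters, not merely for $2a>\sigma^2$) is the real content, and it has already been proved via the decomposition into $I_1+I_2+I_3$ and the auxiliary lemmas. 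The only point requiring a moment's care is the non-degeneracy of the limiting denominator $a\sigma^2/(2b^2)$, i.\,e.\ that equality does not hold in Cauchy--Schwarz for the first two moments of the stationary distribution; since $r$ is non-constant this is immediate, and it simultaneously guarantees convergence of the ratios and their well-definedness for all sufficiently large $T$.
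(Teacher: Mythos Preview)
Your proof is correct and follows essentially the same approach as the paper: divide numerator and denominator by $T^2$, invoke \eqref{ergo-1} (via Remark~\ref{rem:conv}) and Theorem~\ref{th:int-r2}, note that the limiting denominator $a\sigma^2/(2b^2)$ is strictly positive, and pass to the limit. The paper's only cosmetic variation is that it first proves $\tilde b_T\to b$ and then writes $\tilde a_T=\tilde b_T\cdot\frac1T\int_0^T r_t\,dt$, whereas you handle both estimators directly.
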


\begin{proof}
It follows from the convergence \eqref{ergo-1} and Theorem~\ref{th:int-r2} that
\[
\tilde b_T = \frac{\sigma^2}{2}\cdot\frac{\frac1T \int_0^T r_t\,dt}{\frac1T \int_0^T r_t^2 \,dt - \left(\frac1T\int_0^T r_t\,dt\right)^2}
\to \frac{\sigma^2}{2}\cdot\frac{\frac{a}{b}}{\frac{a^2}{b^2} + \frac{a\sigma^2}{2b^2} - \frac{a^2}{b^2}}=b,
\quad\text{a.\,s., as } T\to\infty.
\]
Further, from \eqref{ergo-1} we have the convergence
\[
\tilde a_T = \tilde b_T \cdot \frac1T \int_0^T r_t\,dt
\to b\cdot \frac{a}{b}=a,
\quad\text{a.\,s., as } T\to\infty.
\qedhere
\]
\end{proof}

\begin{remark}
For the model \eqref{equ:10} the alternative estimator of $(\alpha,\mu)$ can be defined as follows:
\begin{align*}
\tilde \alpha_T &= \frac{\sigma^2}{2}\cdot\frac{T \int_0^T r_t\,dt}{T \int_0^T r_t^2 \,dt - \left(\int_0^T r_t\,dt\right)^2},
\qquad
\tilde\mu_T = \frac1T \int_0^T r_t\,dt.
\end{align*}
The strong consistency of the estimator $(\tilde \alpha_T, \tilde \mu_T)$ follows from Theorem~\ref{th:consist-new} and the relations~\eqref{equ:dif-param}.
\end{remark}
\vskip 3mm
    	
\noindent 5.    SIMULATIONS

In this section we illustrate the quality of the estimators, using Monte Carlo simulation.
For each set of parameters $(a,b,\sigma)$, we generate 100 sample paths of the solution $r=\set{r_t,t\in[0,T]}$ to the equation \eqref{equ:1} using Euler's approximation.
We choose the initial value $r_0=1$ for all simulations.
Then we compute means and standard deviations of the estimators at the times $T=10,50,100,150,200$.

In the ergodic case $2a>\sigma^2$ we compare two estimators, $(\hat a_T, \hat b_T)$ and $(\tilde a_T, \tilde b_T)$.
Tables~\ref{tab:1} and \ref{tab:2} report the simulation results concerning the estimation of $a$ and $b$ respectively.
We see that both estimators are consistent and behave similarly, however, the maximum likelihood estimator $(\hat a_T, \hat b_T)$ slightly outperforms the alternative estimator $(\tilde a_T, \tilde b_T)$.

For the non-ergodic case $2a<\sigma^2$ the maximum likelihood estimator does not make sense, so only the means and variances for $(\tilde a_T, \tilde b_T)$ are reported, see Tables~\ref{tab:3} and \ref{tab:4}. Clearly, the numerical results confirm the consistency of this estimator. However, in many cases the rate of convergence is quite slow (compared to the ergodic case), especially for $\tilde b_T$.

\renewcommand{\baselinestretch}{1}
\begin{table}
\centering
\caption{Means and standard deviations of $\hat a_T$ and $\tilde a_T$ for $2a>\sigma^2$\label{tab:1}}
\footnotesize
\begin{tabular}{*{9}{Q}}\toprule
&&&&& T &&&\\
\cmidrule(l){5-9}
a & b & \sigma &  & 10 & 50 & 100 & 150 & 200\\\otoprule
1&1&1& \E [\hat a_T]  & 1.3520 & 1.0507 & 1.0187 & 1.0105 & 1.0073\\
&&&    \sd[\hat a_T] &  0.5357 & 0.1628 & 0.0983 & 0.0818 & 0.0723\\
\addlinespace
&&& \E [\tilde a_T]  & 1.5328 & 1.1152 & 1.0535 & 1.0300 & 1.0226\\
&&& \sd[\tilde a_T]  & 0.5804 & 0.2435 & 0.1701 & 0.1306 & 0.1203\\
\midrule
1&2&1& \E [\hat a_T]  & 1.2013 & 1.0401 & 1.0144 & 1.0082 & 1.0046\\
&&&    \sd[\hat a_T] &  0.3201 & 0.1077 & 0.0700 & 0.0557 & 0.0495\\
\addlinespace
&&& \E [\tilde a_T]  & 1.3028 & 1.0766 & 1.0270 & 1.0206 & 1.0140\\
&&& \sd[\tilde a_T]  & 0.4304 & 0.1618 & 0.1051 & 0.0974 & 0.0872\\
\midrule
1&3&1& \E [\hat a_T]  & 1.0915 & 1.0201 & 1.0192 & 1.0129 & 1.0141\\
&&&    \sd[\hat a_T] &  0.2029 & 0.0821 & 0.0559 & 0.0470 & 0.0459\\
\addlinespace
&&& \E [\tilde a_T]  & 1.1142 & 1.0411 & 1.0241 & 1.0252 & 1.0239\\
&&& \sd[\tilde a_T]  & 0.2961 & 0.1282 & 0.0944 & 0.0810 & 0.0778\\
\midrule
2&1&1& \E [\hat a_T]  & 2.8227 & 2.0724 & 1.9885 & 1.9909 & 1.9963\\
&&&    \sd[\hat a_T] &  1.1649 & 0.3163 & 0.2316 & 0.2087 & 0.1908\\
\addlinespace
&&& \E [\tilde a_T]  & 2.8584 & 2.1097 & 2.0035 & 2.0030 & 2.0157\\
&&& \sd[\tilde a_T]  & 1.1089 & 0.4139 & 0.3142 & 0.2724 & 0.2446\\
\midrule
2&2&1& \E [\hat a_T]  & 2.2837 & 2.0511 & 2.0353 & 2.0170 & 2.0216\\
&&&    \sd[\hat a_T] &  0.6633 & 0.2375 & 0.1662 & 0.1365 & 0.1173\\
\addlinespace
&&& \E [\tilde a_T]  & 2.4734 & 2.1057 & 2.0447 & 2.0057 & 2.0065\\
&&& \sd[\tilde a_T]  & 0.7880 & 0.2990 & 0.2134 & 0.1784 & 0.1560\\
\midrule
2&3&1& \E [\hat a_T]  & 2.2114 & 2.0113 & 2.0120 & 2.0098 & 2.0091\\
&&&    \sd[\hat a_T] &  0.5337 & 0.1986 & 0.1509 & 0.1057 & 0.0915\\
\addlinespace
&&& \E [\tilde a_T]  & 2.2874 & 2.0208 & 2.0224 & 2.0195 & 2.0103\\
&&& \sd[\tilde a_T]  & 0.5829 & 0.2602 & 0.1825 & 0.1379 & 0.1217\\
\midrule
3&1&1& \E [\hat a_T]  & 3.9869 & 3.1923 & 3.0761 & 3.0475 & 3.0538\\
&&&    \sd[\hat a_T] & 1.2874 & 0.5061 & 0.4039 & 0.3421 & 0.2991\\
\addlinespace
&&& \E [\tilde a_T]  & 3.7772 & 3.1524 & 3.0538 & 3.0347 & 3.0475\\
&&& \sd[\tilde a_T]  & 1.3190 & 0.5808 & 0.4591 & 0.4042 & 0.3519\\
\midrule
3&1&2& \E [\hat a_T]  & 3.7755 & 3.1447 & 3.0612 & 3.0358 & 3.0359\\
&&&    \sd[\hat a_T] &  1.2919 & 0.3776 & 0.2618 & 0.2124 & 0.1919\\
\addlinespace
&&& \E [\tilde a_T]  & 4.1227 & 3.3441 &  3.1022 & 3.0743 & 3.0872\\
&&& \sd[\tilde a_T]  & 1.5027 & 0.6588 & 0.5050 & 0.4323 & 0.4014\\
\bottomrule
\end{tabular}
\end{table}

\begin{table}
\centering
\caption{Means and standard deviations of $\hat b_T$ and $\tilde b_T$ for $2a>\sigma^2$ \label{tab:2}}
\footnotesize
\begin{tabular}{*{9}{Q}}\toprule
&&&&& T &&&\\
\cmidrule(l){5-9}
a & b & \sigma &  & 10 & 50 & 100 & 150 & 200\\\otoprule
1&1&1& \E [\hat b_T]  & 1.4514 & 1.0772 & 1.0244 & 1.0176 & 1.0132\\
&&&    \sd[\hat b_T] &  0.5969 & 0.2184 & 0.1398 & 0.1086 & 0.0986\\
\addlinespace
&&& \E [\tilde b_T]  & 1.6350 & 1.1459 & 1.0554 & 1.0363 & 1.0271\\
&&& \sd[\tilde b_T]  & 0.6687 & 0.2983 & 0.1829 & 0.1437 & 0.1293\\
\midrule
1&2&1& \E [\hat b_T]  & 2.4399 & 2.1229 & 2.0677 & 2.0335 & 2.0159\\
&&&    \sd[\hat b_T] &  0.6993 & 0.2658 & 0.1941 & 0.1693 & 0.1587\\
\addlinespace
&&& \E [\tilde b_T]  & 2.5135 & 2.1743 & 2.0851 & 2.0526 & 2.0303\\
&&& \sd[\tilde b_T]  & 0.7686 & 0.3495 & 0.2628 & 0.2393 & 0.2178\\
\midrule
1&3&1& \E [\hat b_T]  & 3.3578 & 3.0316 & 3.0287 & 3.0094 & 3.0162\\
&&&    \sd[\hat b_T] &  0.8546 & 0.3701 & 0.2479 & 0.2071 & 0.1949\\
\addlinespace
&&& \E [\tilde b_T]  & 3.1754 & 3.0495 &  3.0243 & 3.0313 & 3.0339\\
&&& \sd[\tilde b_T]  & 0.8816 & 0.4501 & 0.3351 & 0.2731 & 0.2578\\
\midrule
2&1&1& \E [\hat b_T]  & 1.4767 & 1.0521 &  0.9980 & 0.9967 & 0.9998\\
&&&    \sd[\hat b_T] &  0.6254 & 0.1944 & 0.1323 & 0.1214 & 0.1101\\
\addlinespace
&&& \E [\tilde b_T]  & 1.5466 & 1.0813 & 1.0102 & 1.0056 & 1.0120\\
&&& \sd[\tilde b_T]  & 0.5977 & 0.2338 & 0.1702 & 0.1475 & 0.1333\\
\midrule
2&2&1& \E [\hat b_T]  & 2.3087 & 2.0703 & 2.0437 & 2.0210 & 2.0216\\
&&&    \sd[\hat b_T] &  0.6851 & 0.2865 & 0.1996 & 0.1600 & 0.1324\\
\addlinespace
&&& \E [\tilde b_T]  & 2.4894 & 2.1255 & 2.0536 & 2.0104 & 2.0074\\
&&& \sd[\tilde b_T]  & 0.7913 & 0.3422 & 0.2441 & 0.2007 & 0.1755\\
\midrule
2&3&1& \E [\hat b_T]  & 3.3561 & 3.0417 & 3.0420 & 3.0354 & 3.0311\\
&&&    \sd[\hat b_T] &  0.8854 & 0.3546 & 0.2705 & 0.1811 & 0.1619\\
\addlinespace
&&& \E [\tilde b_T]  & 3.4176 & 3.0446 & 3.0535 & 3.0465 & 3.0305\\
&&& \sd[\tilde b_T]  & 0.9314 & 0.4384 & 0.3185 & 0.2254 & 0.1988\\
\midrule
3&1&1& \E [\hat b_T]  & 1.3899 & 1.0798 & 1.0337 & 1.0264 & 1.0262\\
&&&    \sd[\hat b_T] & 0.5563 & 0.1882 & 0.1525 & 0.1283 & 0.1109\\
\addlinespace
&&& \E [\tilde b_T]  & 1.3878 & 1.0797 & 1.0326 & 1.0264 & 1.0268\\
&&& \sd[\tilde b_T]  & 0.5825 & 0.2175 &  0.1688 & 0.1451 & 0.1250\\
\midrule
3&1&2& \E [\hat b_T]  & 1.3616 & 1.0617 & 1.0209 & 1.0038 & 0.9989\\
&&&    \sd[\hat b_T] & 0.4884 & 0.2102 & 0.1361 & 0.1011 & 0.0977\\
\addlinespace
&&& \E [\tilde b_T] & 1.6055 & 1.1483 & 1.0447 & 1.0220 & 1.0185\\
&&& \sd[\tilde b_T]  & 0.6558 & 0.3057 & 0.2179 & 0.1673 & 0.1519\\
\bottomrule
\end{tabular}
\end{table}

\begin{table}
\centering
\caption{Means and standard deviations $\tilde a_T$ for $2a<\sigma^2$\label{tab:3}}
\footnotesize
\begin{tabular}{*{9}{Q}}\toprule
&&&&& T &&&\\
\cmidrule(l){5-9}
a & b & \sigma &  & 10 & 50 & 100 & 150 & 200\\\otoprule
1&1&2& \E [\tilde a_T]  & 1.6220  & 1.1125 & 1.0583 & 1.0384 & 1.0348\\
&&& \sd[\tilde a_T]  & 0.6134 & 0.2720 & 0.2164 & 0.1826 & 0.1589\\
\midrule
1&1&3& \E [\tilde a_T]  & 1.3276 & 1.1253 & 1.1256 & 1.1407 & 1.1091\\
&&& \sd[\tilde a_T]  & 2.5786 & 1.0947 & 0.6717 & 0.5451 & 0.4372\\
\midrule
1&2&2& \E [\tilde a_T]  & 1.3309 & 1.0907 & 1.0570 & 1.0357 & 1.0212\\
&&& \sd[\tilde a_T]  & 0.3762 & 0.2141 & 0.1442 & 0.1317 & 0.1144\\
\midrule
1&2&3& \E [\tilde a_T]  & 1.2227 & 1.1028 & 1.1034 & 1.0695 & 1.0367\\
&&& \sd[\tilde a_T]  & 2.8947 & 0.6768 & 0.4528 & 0.3581 & 0.2840\\
\midrule
1&3&2& \E [\tilde a_T]  & 1.3113 & 1.1099 & 1.0618 & 1.0432 & 1.0330\\
&&& \sd[\tilde a_T]  & 0.3936 & 0.1917 & 0.1263 & 0.1137 & 0.1020\\
\midrule
1&3&3& \E [\tilde a_T]  & 1.3829 & 1.2046 & 1.0947 & 1.0544 & 1.0404\\
&&& \sd[\tilde a_T]  & 2.2021 & 0.8893 & 0.3813 & 0.2773 & 0.2507\\
\midrule
2&1&3& \E [\tilde a_T]  & 3.3746 & 2.3050 & 2.1699 & 2.1496 & 2.1363\\
&&& \sd[\tilde a_T]  & 1.5845 & 0.5720 & 0.4390 & 0.3586 & 0.3095\\
\midrule
2&2&3& \E [\tilde a_T]  & 2.7734 & 2.2172 & 2.1665 & 2.1346 & 2.1137\\
&&& \sd[\tilde a_T]  & 0.9826 & 0.4067 & 0.3020 & 0.2526 & 0.2273\\
\midrule
2&3&3& \E [\tilde a_T]  & 2.5827 & 2.1628 & 2.1024 & 2.0647 & 2.0350\\
&&& \sd[\tilde a_T]  & 0.7142 & 0.3657 & 0.2816 & 0.2316 & 0.1994\\
\midrule
3&1&3& \E [\tilde a_T]  & 4.5037 & 3.4455 & 3.2534 & 3.1348 & 3.0772\\
&&& \sd[\tilde a_T]  & 1.6354 & 0.7223 & 0.5435 & 0.5096 & 0.4290\\
\bottomrule
\end{tabular}
\end{table}

\begin{table}
\centering
\caption{Means and standard deviations $\tilde b_T$ for $2a<\sigma^2$\label{tab:4}}
\footnotesize
\begin{tabular}{*{9}{Q}}\toprule
&&&&& T &&&\\
\cmidrule(l){5-9}
a & b & \sigma &  & 10 & 50 & 100 & 150 & 200\\\otoprule
1&1&2& \E [\tilde b_T]  & 2.0929 & 1.2595 & 1.1232 & 1.0907 & 1.0621\\
&&& \sd[\tilde b_T]  & 1.2187 & 0.4458 & 0.3275 & 0.2782 & 0.2326\\
\midrule
1&1&3& \E [\tilde b_T]  & 1.4055 & 1.0156 & 1.1587 & 1.2190 & 1.1632\\
&&& \sd[\tilde b_T]  & 2.6746 & 1.9533 & 0.9132 & 0.7432 & 0.6196\\
\midrule
1&2&2& \E [\tilde b_T]  & 2.9370 & 2.2790 & 2.1923 & 2.1499 & 2.1059\\
&&& \sd[\tilde b_T]  & 1.2131 & 0.5691 & 0.3801 & 0.3290 & 0.2883\\
\midrule
1&2&3& \E [\tilde b_T]  & 1.7045 & 2.2836 & 2.3097 & 2.2555 & 2.1756\\
&&& \sd[\tilde b_T]  & 9.6195 & 2.1124 & 1.1064 & 0.8718 & 0.6762\\
\midrule
1&3&2& \E [\tilde b_T]  & 3.9779 & 3.3292 & 3.2273 & 3.1416 & 3.1023\\
&&& \sd[\tilde b_T]  & 1.5206 & 0.7385 & 0.5665 & 0.5010 & 0.4566\\
\midrule
1&3&3& \E [\tilde b_T]  & 3.8650 & 3.8265 & 3.4710 & 3.2583 & 3.1462\\
&&& \sd[\tilde b_T]  & 13.9606 & 4.6146 & 1.5811 & 0.9728 & 0.8789\\
\midrule
2&1&3& \E [\tilde b_T]  & 2.2511 & 1.2859 & 1.1613 & 1.1223 & 1.1047\\
&&& \sd[\tilde b_T]  & 1.6015 & 0.4753 & 0.3403 & 0.2929 & 0.2429\\
\midrule
2&2&3& \E [\tilde b_T]  & 3.2242 & 2.2986 & 2.2252 & 2.1454 & 2.1101\\
&&& \sd[\tilde b_T]  & 1.5489 & 0.5355 & 0.3979 & 0.3355 & 0.2954\\
\midrule
2&3&3& \E [\tilde b_T]  & 4.1619 & 3.3906 & 3.2490 & 3.1841 & 3.1132\\
&&& \sd[\tilde b_T]  & 1.7422 & 0.7886 & 0.5821 & 0.4935 & 0.4223\\
\midrule
3&1&3& \E [\tilde b_T]  & 2.0684 & 1.3138 & 1.1622 & 1.1039 & 1.0754\\
&&& \sd[\tilde b_T]  & 0.9192 & 0.3868 & 0.2481 & 0.2418 & 0.2081\\
\bottomrule
\end{tabular}
\end{table}

\renewcommand{\baselinestretch}{1.5}
\vskip 3mm

\renewcommand{\refname}{\normalfont\normalsize BIBLIOGRAPHY}


\begin{thebibliography}{20}

\bibitem[Alfonsi(2015)]{Alfonsi}
A.~Alfonsi.
\newblock \emph{Affine diffusions and related processes: simulation, theory and
  applications}, volume~6 of \emph{Bocconi \& Springer Series}.
\newblock Springer, Cham; Bocconi University Press, Milan, 2015.

\bibitem[Barczy et~al.(2019)Barczy, Ben~Alaya, Kebaier, and Pap]{Barczy2017}
M.~Barczy, M.~Ben~Alaya, A.~Kebaier, and G.~Pap.
\newblock Asymptotic properties of maximum likelihood estimator for the growth
  rate of a stable {CIR} process based on continuous time observations.
\newblock \emph{Statistics}, 53\penalty0 (3):\penalty0 533--568, 2019.

\bibitem[Bel Hadj~Khlifa et~al.(2016)Bel Hadj~Khlifa, Mishura, Ralchenko, and
  Zili]{meriem}
M.~Bel Hadj~Khlifa, Y.~Mishura, K.~Ralchenko, and M.~Zili.
\newblock Drift parameter estimation in stochastic differential equation with
  multiplicative stochastic volatility.
\newblock \emph{Mod. Stoch. Theory Appl.}, 3\penalty0 (4):\penalty0 269--285,
  2016.

\bibitem[Ben~Alaya and Kebaier(2012)]{BenAlaya2012}
M.~Ben~Alaya and A.~Kebaier.
\newblock Parameter estimation for the square-root diffusions: ergodic and
  nonergodic cases.
\newblock \emph{Stoch. Models}, 28\penalty0 (4):\penalty0 609--634, 2012.

\bibitem[Ben~Alaya and Kebaier(2013)]{BenAlaya2013}
M.~Ben~Alaya and A.~Kebaier.
\newblock Asymptotic behavior of the maximum likelihood estimator for ergodic
  and nonergodic square-root diffusions.
\newblock \emph{Stoch. Anal. Appl.}, 31\penalty0 (4):\penalty0 552--573, 2013.

\bibitem[Cozma and Reisinger(2016)]{Cozma2016}
A.~Cozma and C.~Reisinger.
\newblock Exponential integrability properties of {E}uler discretization
  schemes for the {C}ox--{I}ngersoll--{R}oss process.
\newblock \emph{Discrete Contin. Dyn. Syst. Ser. B}, 21\penalty0 (10):\penalty0
  3359--3377, 2016.

\bibitem[De~Rossi(2010)]{Rosi2010}
G.~De~Rossi.
\newblock Maximum likelihood estimation of the {C}ox--{I}ngersoll--{R}oss model
  using particle filters.
\newblock \emph{Computational Economics}, 36\penalty0 (1):\penalty0 1--16,
  2010.

\bibitem[Deelstra and Delbaen(1995)]{Deelstra1995}
G.~Deelstra and F.~Delbaen.
\newblock Long-term returns in stochastic interest rate models.
\newblock \emph{Insurance Math. Econom.}, 17\penalty0 (2):\penalty0 163--169,
  1995.

\bibitem[Dereich et~al.(2012)Dereich, Neuenkirch, and Szpruch]{Dereich2012}
S.~Dereich, A.~Neuenkirch, and L.~Szpruch.
\newblock An {E}uler-type method for the strong approximation of the
  {C}ox--{I}ngersoll--{R}oss process.
\newblock \emph{Proc. R. Soc. Lond. Ser. A Math. Phys. Eng. Sci.}, 468\penalty0
  (2140):\penalty0 1105--1115, 2012.

\bibitem[Karatzas and Shreve(1991)]{KaratzasShreve91}
I.~Karatzas and S.~E. Shreve.
\newblock \emph{Brownian motion and stochastic calculus}, volume 113 of
  \emph{Graduate Texts in Mathematics}.
\newblock Springer-Verlag, New York, second edition, 1991.

\bibitem[Klad{\'\i}vko(2007)]{Kladivko2007}
K.~Klad{\'\i}vko.
\newblock Maximum likelihood estimation of the cox--ingersoll--ross process:
  the matlab implementation.
\newblock \emph{Technical Computing Prague}, 7, 2007.

\bibitem[Kutoyants(2004)]{Kutoyants}
Y.~A. Kutoyants.
\newblock \emph{Statistical inference for ergodic diffusion processes}.
\newblock Springer Series in Statistics. Springer-Verlag London, Ltd., London,
  2004.

\bibitem[Li and Ma(2015)]{Li2015}
Z.~Li and C.~Ma.
\newblock Asymptotic properties of estimators in a stable
  {C}ox--{I}ngersoll--{R}oss model.
\newblock \emph{Stochastic Process. Appl.}, 125\penalty0 (8):\penalty0
  3196--3233, 2015.

\bibitem[Liptser and Shiryayev(2012)]{lipshir}
R.~Liptser and A.~N. Shiryayev.
\newblock \emph{Theory of martingales}, volume~49.
\newblock Springer Science \& Business Media, 2012.

\bibitem[Mijatovi\'{c} and Urusov(2012)]{MijatovicUrusov2012}
A.~Mijatovi\'{c} and M.~Urusov.
\newblock Convergence of integral functionals of one-dimensional diffusions.
\newblock \emph{Electron. Commun. Probab.}, 17:\penalty0 no. 61, 13, 2012.

\bibitem[Milstein and Schoenmakers(2015)]{Milstein2013}
G.~N. Milstein and J.~Schoenmakers.
\newblock Uniform approximation of the {C}ox--{I}ngersoll--{R}oss process.
\newblock \emph{Adv. in Appl. Probab.}, 47\penalty0 (4):\penalty0 1132--1156,
  2015.

\bibitem[Mishura and Munchak(2016)]{Mishura2016}
Y.~Mishura and Y.~Munchak.
\newblock Functional limit theorems for additive and multiplicative schemes in
  the {C}ox--{I}ngersoll--{R}oss model.
\newblock \emph{Mod. Stoch. Theory Appl.}, 3\penalty0 (1):\penalty0 1--17,
  2016.

\bibitem[Mishura and Shevchenko(2017)]{MishuraShevchenko2017}
Y.~Mishura and G.~Shevchenko.
\newblock \emph{Theory and Statistical Applications of Stochastic Processes}.
\newblock ISTE, 2017.

\bibitem[Overbeck and Ryd\'{e}n(1997)]{Overbeck1997}
L.~Overbeck and T.~Ryd\'{e}n.
\newblock Estimation in the {C}ox--{I}ngersoll--{R}oss model.
\newblock \emph{Econometric Theory}, 13\penalty0 (3):\penalty0 430--461, 1997.

\bibitem[Skorokhod(1989)]{Skorokhod-asymp}
A.~V. Skorokhod.
\newblock \emph{Asymptotic methods in the theory of stochastic differential
  equations}, volume~78 of \emph{Translations of Mathematical Monographs}.
\newblock American Mathematical Society, Providence, RI, 1989.

\end{thebibliography}
\end{document}